\documentclass{amsart}
\usepackage{amsmath}
\usepackage{amsfonts}
\usepackage{amssymb}
\usepackage{color}

\newcommand{\ba}{\mathbf{a}} \newcommand{\bx}{\mathbf{x}} \newcommand{\by}{\mathbf{y}}
\newcommand{\cF}{\mathcal{F}}

\newcommand{\vac}{|0\rangle}

\newtheorem{thm}{Theorem}[section]

\newtheorem{lem}[thm]{Lemma}
\newtheorem{prop}[thm]{Proposition}

\theoremstyle{definition}

\theoremstyle{remark}
\newtheorem{rem}{Remark}[section]

\newcommand{\half}{\frac{1}{2}}

\newcommand{\be}{\begin{equation}}
\newcommand{\ee}{\end{equation}}
\newcommand{\bea}{\begin{eqnarray}}
\newcommand{\eea}{\end{eqnarray}}
\newcommand{\ben}{\begin{eqnarray*}}
\newcommand{\een}{\end{eqnarray*}}
\newcommand{\bet}{\begin{equation}
\begin{split}}
\newcommand{\eet}{\end{split}
\end{equation}}

\DeclareMathOperator{\Span}{span}

\begin{document}

\title{On Fermionic Representation of the Framed Topological Vertex}
\date{}
\subjclass[2010]{51P05}
\thanks{\emph{Key words}. the topological vertex, Bogoliubov transforms, the ADKMV Conjecture}
\author{Fusheng Deng \and Jian Zhou}
\address{Fusheng Deng: \ School of Mathematical Sciences, Graduate University of Chinese Academy of Sciences, Beijing 100049, China ; Department of Mathematical Sciences\\Tsinghua University\\Beijing, 100084, China }
\email{fshdeng@gucas.ac.cn}
\address{Jian Zhou: Department of Mathematical Sciences\\Tsinghua University\\Beijing, 100084, China}
\email{jzhou@math.tsinghua.edu.cn}

\begin{abstract}
The Gromov-Witten invariants of $\mathbb{C}^3$ with branes is encoded in the topological vertex
which has a very complicated combinatorial expression.
A simple formula for the topological vertex was proposed by Aganagic {\em et al} in the fermionic picture.
We will propose a similar formula for the framed topological vertex and
prove it in the case when there are one or two branes.
\end{abstract}

\maketitle

\section{Introduction}

The topological vertex \cite{AKMV, LLLZ} is the basic building block for
the theory of open and closed Govomov-Witten invariants of toric Calabi-Yau threefolds.
It encodes open Gromov-Witten invariants of $\mathbb{C}^3$ with three special $D$-branes,
and Gromov-Witten invariants of any toric Calabi-Yau threefold, both open and closed,
can be computed from it by  certain explicit gluing process.
One way to understand the gluing is through taking inner products or vacuum expectation values
on the bosonic Fock space \cite{Zh2}.
More precisely,
the boundary condition on each of the three $D$-branes
is indexed by a partition $\mu^i$, $i=1,2,3$.
It is well-known that the space $\Lambda$ of symmetric functions has some natural basis indexed by partitions
(e.g. the Newton functions).
One then understands the topological vertex as an element in the tensor product $\Lambda^{\otimes 3}$,
and the gluing is achieved by taking inner products on the corresponding copies of $\Lambda$.
In this picture the topological vertex has an extremely complicated combinatorial expression
in terms of skew Schur functions.
Suggested by the boson-fermion correspondence,
a deep conjecture was made in \cite{AKMV} and \cite{ADKMV}
that the topological vertex has a surprisingly simple expression in the fermionic picture:
It is a Bogoliubov transform of the fermionic vacuum,
i.e. the fermionic vacuum acted upon by an exponential of a quadratic expression of fermionic operators.
We will refer to this as the ADKMV Conjecture.
See \S \ref{sec:ADKMV} for a precise statement.

A straightforward application as pointed out in \cite{ADKMV}
is related to integrable hierarchies:
the one-legged case is related to the KP hierarchy,
the two-legged case to the $2$-dimensional Toda hierarchy,
and the three-legged case to the $3$-component KP hierarchy.
The one-legged and the two-legged cases can also be seen directly from the bosonic picture \cite{Zh3},
but the three-legged case can only be seen through the fermionic picture.

In the rest of this paper, after reviewing some preliminaries in \S 2,
we will first propose in \S 3 a generalization of the ADKMV Conjecture to the framed topological vertex.
For the precise statement see \S \ref{sec:Framed}.
We will refer to this conjecture as the Framed ADKMV Conjecture.
Secondly,
we will prove the one-legged and two-legged cases of the Framed ADKMV Conjecture
in \S \ref{sec:OneLegged} and \S \ref{sec:TwoLegged} respectively.
In the final \S 6 we will derive a determinatal formula for the framed topological vertex in the three-legged
case based on the Framed ADKMV Conjecture.

\vspace{.1in}
{\em Acknowledgements}.
The work was partially done during the first author's attending
the mathematical seminars supported by Morningside Center of CAS.
The first author is partially supported by NSFC grants
(11001148 and 10901152) and the President Fund of GUCAS. The second author
is partially supported by two NSFC grants (10425101 and 10631050) and a 973
project grant NKBRPC (2006cB805905).

\section{Preliminaries}

\subsection{Partitions}
A partition $\mu$ of a positive integral number $n$ is a decreasing finite sequence of integers $\mu_1\geq\cdots \geq\mu_l>0$,
such that $|\mu|:= \mu_1 + \cdots + \mu_l = n$.
The following number associated to $\mu$ will be useful in this paper:
\be
 \kappa_\mu = \sum_{i=1}^l \mu_i(\mu_i - 2i + 1).
\ee
It is very useful to graphically represent a partition by its Young diagram.
This leads to many natural definitions.
First of all,
by transposing the Young diagram one can define the conjugate $\mu^t$ of $\mu$.
Secondly
assume the Young diagram of $\mu$ has $k$ boxes in the diagonal.
Define $m_i = \mu_i - i$ and $n_i = \mu^t_i - i$ for $i = 1, \cdots , k$,
then it is clear that $m_1> \cdots > m_k \geq 0$ and  $n_1> \cdots > n_k \geq 0$.
The partition $\mu$ is completely determined by the numbers $m_i , n_i$.
We often denote the partition $\mu$ by $(m_1, \dots , m_k | n_1, \dots , n_k)$,
this is called the Frobenius notation.
A partition of the form $(m|n)$ in Frobenius notation is called a hook partition.

For a box $e$ at the position $(i , j)$ in the Young diagram of $\mu$,
define its content by $c(e) = j-i$.
Then it is easy to see that
\be \label{eqn:Kappa}
\kappa_\mu = 2\sum_{e\in \mu}c(e).
\ee
Indeed,
\ben
&& \sum_{e\in \mu}c(e) = \sum_{i=1}^l \sum_{j=1}^{\mu_i} (j-i)
= \sum_{i=1}^n (\half \mu_i(\mu_i+1) - i \mu_i) = \half \kappa_\mu.
\een
A straightforward application of \eqref{eqn:Kappa} is the following:

\begin{lem} \label{lm:kappa}
Let $\mu = (m_1, m_2, \dots, m_k | n_1, n_2, \dots, n_k)$ be a partition written in the Frobenius notation. Then we have
\be
\kappa_\mu = \sum_{i=1}^k m_i(m_i+1) - \sum_{i=1}^k n_i(n_i+1).
\ee
In particular,
\be
\kappa_{(m_1, m_2, \dots, m_k | n_1, n_2, \dots, n_k)}
= \sum_{i=1}^k \kappa_{(m_i|n_i)}.
\ee
\end{lem}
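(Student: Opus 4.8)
The plan is to evaluate $\kappa_\mu$ directly through the content formula \eqref{eqn:Kappa}, reorganizing the sum $\sum_{e\in\mu}c(e)$ according to the principal hook decomposition of the Young diagram. First I would recall that the diagonal boxes $(1,1),\dots,(k,k)$ split the diagram of $\mu$ into $k$ disjoint principal hooks, the $i$-th hook $H_i$ consisting of the corner box $(i,i)$, the arm boxes $(i,i+1),\dots,(i,\mu_i)$ lying to its right, and the leg boxes $(i+1,i),\dots,(\mu^t_i,i)$ lying below it. Since a box $(a,b)$ of $\mu$ lies in $H_{\min(a,b)}$, these hooks partition the boxes of $\mu$, so $\sum_{e\in\mu}c(e)=\sum_{i=1}^k\sum_{e\in H_i}c(e)$.

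The second step is to compute the content sum over a single hook $H_i$ using $c(a,b)=b-a$. The arm boxes have contents $1,2,\dots,m_i$ (as $b$ runs over $i+1,\dots,\mu_i=i+m_i$), the corner box $(i,i)$ has content $0$, and the leg boxes have contents $-1,-2,\dots,-n_i$ (as $a$ runs over $i+1,\dots,\mu^t_i=i+n_i$). Hence $\sum_{e\in H_i}c(e)=\half m_i(m_i+1)-\half n_i(n_i+1)$. Summing over $i$ and inserting into \eqref{eqn:Kappa} yields $\kappa_\mu=\sum_{i=1}^k m_i(m_i+1)-\sum_{i=1}^k n_i(n_i+1)$, which is the first assertion.

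For the ``in particular'' statement, note that a hook partition $(m_i|n_i)$ has a single diagonal box, so applying the formula just proved with $k=1$ gives $\kappa_{(m_i|n_i)}=m_i(m_i+1)-n_i(n_i+1)$. Summing these identities over $i$ reproduces exactly the right-hand side of the first formula, so $\kappa_\mu=\sum_{i=1}^k\kappa_{(m_i|n_i)}$, as claimed.

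I do not expect any genuine obstacle here; the only point requiring care is the bookkeeping of the hook decomposition, namely verifying that the principal hooks are pairwise disjoint and exhaust the diagram, and matching the index ranges of the arm and the leg to $m_i$ and $n_i$ correctly. Once the content of each box of $H_i$ is identified, the arithmetic sums $1+\cdots+m_i$ and $1+\cdots+n_i$ close the argument immediately.
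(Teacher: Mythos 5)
Your proposal is correct and follows essentially the same route as the paper: the paper's one-line proof also computes $\sum_{e\in\mu}c(e)$ by the principal hook decomposition, with the $i$-th hook contributing contents $1,\dots,m_i$ from the arm and $-1,\dots,-n_i$ from the leg, and then invokes \eqref{eqn:Kappa}. You have simply made explicit the bookkeeping (disjointness and exhaustion of the hooks, index ranges) that the paper leaves implicit.
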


\begin{proof}
It is clear that:
\ben
\sum_{e \in (m_1, \dots, m_k|n_1, \dots, n_k)} c(e)
= \sum_{i=1}^k (\sum_{c= 1}^{m_i} c - \sum_{c=1}^{n_i} c)
= \sum_{i=1}^k (\half m_i(m_i+1) - \half n_i(n_i+1)).
\een
\end{proof}

\subsection{Schur functions and skew Schur functions}
Let $\Lambda$ be the space of symmetric functions in $\bx = (x_1, x_2, \dots)$.
For a partition $\mu$, let $s_\mu:=s_\mu(\bx)$ be the Schur function in $\Lambda$.
If we write $\mu = (m_1, \cdots , m_k | n_1, \cdots , n_k)$ in Frobenius notation, then there is a
determinantal formula that expresses $s_\mu$ in terms of $s_{(m|n)}$.

\begin{prop} \label{prop:SchurHook}\cite[p. 47, Example 9]{Macdonald}
Let $\mu = (m_1, \cdots , m_k | n_1, \cdots , n_k)$ be a partition in Frobenius notation, then
$$s_\mu = \det(s_{(m_i|n_j)})_{1\leq i , j \leq k }.$$
\end{prop}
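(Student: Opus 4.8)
The plan is to give the most conceptual proof, which realizes this Giambelli-type identity as an instance of Wick's theorem in the fermionic Fock space $\cF$ --- the very formalism that underlies the rest of this paper. First I would recall the semi-infinite wedge picture: $\cF$ carries charged free-fermion operators $\psi_r,\psi_r^*$ (indexed by $r\in\mathbb{Z}+\half$) with the canonical anticommutation relations $\{\psi_r,\psi_s^*\}=\delta_{r,s}$ and all other anticommutators vanishing, and the vacuum $\vac$ is annihilated by $\psi_r$ for $r<0$ and by $\psi_r^*$ for $r>0$. In this language a partition written in Frobenius form $\mu=(m_1,\dots,m_k|n_1,\dots,n_k)$ is realized as the state
\be
|\mu\rangle = (-1)^{n_1+\cdots+n_k}\,\psi_{m_1+\half}\cdots\psi_{m_k+\half}\,\psi^*_{-n_1-\half}\cdots\psi^*_{-n_k-\half}\,\vac,
\ee
and the boson--fermion correspondence identifies $|\mu\rangle$ with $s_\mu$. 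In particular the single hook state $|(m|n)\rangle=(-1)^n\psi_{m+\half}\psi^*_{-n-\half}\vac$ corresponds to $s_{(m|n)}$.

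Next I would bring in the bosonizing operator $\Gamma_+(\bx)=\exp\big(\sum_{j\geq 1}\frac{p_j(\bx)}{j}\alpha_j\big)$, built from the positive bosonic modes $\alpha_j$ (themselves normal-ordered fermion bilinears), which produces Schur functions as matrix elements: $s_\mu(\bx)=\langle 0|\Gamma_+(\bx)|\mu\rangle$ up to the sign recorded above. The key structural point is that $\Gamma_+(\bx)$ fixes the vacuum, $\Gamma_+(\bx)\vac=\vac$, and conjugates the one-particle operators linearly, so it represents an element of $GL_\infty$ acting on the single-fermion space. Consequently the matrix element $\langle 0|\Gamma_+(\bx)\,\psi_{m_1+\half}\cdots\psi^*_{-n_k-\half}|0\rangle$ is multilinear and alternating in the inserted fermion operators.

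With this in place the identity is exactly Wick's theorem: a vacuum amplitude of a product of fermion operators, evaluated against a vacuum-preserving $GL_\infty$-element, factors as the determinant of the pairwise (two-point) amplitudes. Contracting $\psi_{m_i+\half}$ with $\psi^*_{-n_j-\half}$ produces precisely the two-particle matrix element $\langle 0|\Gamma_+(\bx)\psi_{m_i+\half}\psi^*_{-n_j-\half}|0\rangle$, which by the hook identification above equals $(-1)^{n_j}s_{(m_i|n_j)}$. Hence
\be
s_\mu = \det\big(s_{(m_i|n_j)}\big)_{1\le i,j\le k},
\ee
once the prefactor $(-1)^{n_1+\cdots+n_k}$ and the column signs $(-1)^{n_j}$ are checked to cancel against the sign of the permutation in the determinant expansion.

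The step I expect to be the main obstacle is precisely this sign and ordering bookkeeping: one must verify that reordering the $\psi$'s and $\psi^*$'s into the canonical contraction pattern introduces exactly the reordering sign that reconstitutes $\det(\cdot)$, and that the two-point function is the hook Schur function with no index shift. A purely combinatorial alternative avoiding the fermionic setup is to start from the Jacobi--Trudi determinant $s_\mu=\det(h_{\mu_i-i+j})$ and apply a classical minor (Pl\"ucker/Bazin) identity to collapse it to the $k\times k$ hook determinant; there the technical heart is the determinantal identity rather than the sign tracking.
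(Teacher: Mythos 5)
The paper offers no proof of this proposition at all: it is quoted verbatim from Macdonald (p.~47, Example~9), where it is established by purely combinatorial/determinantal means, essentially the Jacobi--Trudi plus minor-identity route you mention as your fallback. Your main argument is therefore genuinely different, and it is a sound and standard one: realizing $s_\mu$ as $\langle 0|\Gamma_+(\bx)|\mu\rangle$, writing $|\mu\rangle$ as a product of $k$ creator pairs acting on $\vac$, and invoking the generalized Wick theorem for the group-like element $\Gamma_+(\bx)$ (equivalently, conjugating each fermion by $\Gamma_+$, which acts linearly on the one-particle space, and then applying ordinary Wick) does yield $\det\big(\langle 0|\Gamma_+\psi_{m_i+\half}\psi^*_{-n_j-\half}|0\rangle\big)=\det\big((-1)^{n_j}s_{(m_i|n_j)}\big)$, and the column signs cancel the prefactor $(-1)^{n_1+\cdots+n_k}$. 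What this buys is coherence with the rest of the paper: your computation is precisely the inverse of the paper's own Lemma~\ref{lm:Det1}, since $\sum_\mu s_\mu(\bx)|\mu\rangle$ is itself a Bogoliubov transform of $\vac$ with coefficients $A_{mn}=(-1)^n s_{(m|n)}(\bx)$, so Giambelli is the ``generic one-legged'' instance of the machinery the authors use for the vertex. The one soft spot, which you correctly flag but do not resolve, is the ordering sign: the paper's $|\mu\rangle$ uses the interleaved product $\prod_i\psi_{m_i+\half}\psi^*_{-n_i-\half}$, while your Wick pattern wants all $\psi$'s to the left; the two orderings differ by $(-1)^{k(k-1)/2}$, which must (and does) cancel against the corresponding reordering sign in the Wick contraction pattern for $\psi^*_{-n_1-\half}\cdots\psi^*_{-n_k-\half}$ versus the reversed order. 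A complete write-up should carry out that cancellation explicitly rather than assert it, but this is bookkeeping, not a gap in the idea.
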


The inner product on the space $\Lambda$ is defined by setting the set of Schur functions as an orthonormal basis.
Given two partitions $\mu$ and $\nu$, the skew Schur functions $s_{\mu/\nu}$ is defined by the condition
$$(s_{\mu/\nu} , s_\lambda) = (s_\mu , s_\nu s_\lambda)$$
for all partitions $\lambda$. This is equivalent to define
$$s_{\mu/\nu} = \sum_{\lambda}c_{\nu\lambda}^\mu s_\lambda,$$
where the constants $c_{\nu\lambda}^\mu$ are the structure constants
(called the Littlewood-Richardson coefficients) defined by
\be
s_\nu s_\lambda = \sum_{\gamma}c_{\nu\lambda}^\gamma s_\gamma.
\ee
If we write $\mu = (m_1, \cdots , m_k | n_1, \cdots , n_k)$ and $\nu = (s_1, \cdots , s_r | t_1, \cdots , t_r)$ in Frobenius notations,
then $s_{\mu/\nu} = 0$ unless $r\leq k$ and $s_i\leq m_i , t_i \leq n_i$ for $i = 1, \cdots , r$.
There is a determinantal formula for $s_{\mu/\nu}$ in terms of $s_{(m|n)}$ as follows:

\begin{prop} \label{prop:SkewSchurHook} \cite[p. 88, Example 22(a)]{Macdonald}
Let  $\mu = (m_1, \cdots , m_k | n_1, \cdots , n_k)$ and $\nu = (s_1, \cdots , s_r | t_1, \cdots , t_r)$ be two partitions in Frobenius notations, then
\be \label{eqn:SkewSchurHook}
s_{\mu/\nu}=(-1)^r \det\left(\begin{array}{cc}
(s_{(m_i|n_j)})_{k\times k } & (s_{(m_i-s_j-1|0)})_{k\times r}\\
(s_{(0|n_j-t_i-1)})_{r\times k}    &  0_{r\times r}
\end{array}\right).
\ee
In particular,
\be
s_{(m|n)/(s|t)} = h_{m-s} e_{n-t}.
\ee
\end{prop}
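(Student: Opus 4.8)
The most conceptual route is through the free--fermion (semi-infinite wedge) picture, in which the statement becomes an instance of Wick's theorem. Under the boson--fermion correspondence one has $s_\mu \leftrightarrow |\mu\rangle$, and the Frobenius notation is precisely the particle--hole decomposition: for $\mu=(m_1,\dots,m_k|n_1,\dots,n_k)$ the state $|\mu\rangle$ is built from $\vac$ by the $k$ arm--creation operators $\psi_{-m_i-1/2}$ together with the $k$ leg--creation operators $\psi^*_{-n_j-1/2}$. Skew Schur functions are matrix coefficients of the half--vertex operator $\Gamma_+(\bx)=\exp\big(\sum_{d\ge1}\tfrac{1}{d}p_d(\bx)\alpha_d\big)$, with $s_{\mu/\nu}(\bx)=\langle\nu|\Gamma_+(\bx)|\mu\rangle$; since $\Gamma_+(\bx)\vac=\vac$, one may push $\Gamma_+$ to the right. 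In this language Proposition~\ref{prop:SchurHook} is itself Wick's theorem for $\langle 0|\Gamma_+(\bx)|\mu\rangle$: pushing $\Gamma_+$ through the creation operators of $\mu$ dresses them, and the vacuum expectation of the resulting free fermions is the determinant $\det(s_{(m_i|n_j)})$ of their pairwise contractions.

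The plan is to run the identical computation for $s_{\mu/\nu}(\bx)=\langle\nu|\Gamma_+(\bx)|\mu\rangle$. Writing $\langle\nu|$ and $|\mu\rangle$ in Frobenius form and pushing $\Gamma_+(\bx)$ rightward onto the vacuum dresses only the $\mu$-operators --- attaching the complete symmetric functions $h_d(\bx)$ to the $\psi$'s and the elementary symmetric functions $e_d(\bx)$ to the $\psi^*$'s --- while leaving the $\nu$-operators bare. What remains is a genuine free--fermion vacuum expectation, so Wick's theorem turns it into the determinant of the matrix of two--point functions. There are exactly $k+r$ operators of type $\psi$ (the $k$ arms of $\mu$ and the $r$ legs of $\nu$) and $k+r$ of type $\psi^*$ (the $k$ legs of $\mu$ and the $r$ arms of $\nu$), so the contraction matrix has size $(k+r)\times(k+r)$, matching the asserted block determinant.

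Identifying the four blocks is the heart of the matter. A dressed arm of $\mu$ contracted with a dressed leg of $\mu$ is a full hook, giving the $k\times k$ block $(s_{(m_i|n_j)})$; a dressed arm of $\mu$ against a bare arm of $\nu$ selects a single $h$, namely $h_{m_i-s_j}=s_{(m_i-s_j-1|0)}$, giving the top--right block; a dressed leg of $\mu$ against a bare leg of $\nu$ selects a single $e$, namely $e_{n_j-t_i}=s_{(0|n_j-t_i-1)}$, giving the bottom--left block; and the contraction of a bare arm of $\nu$ with a bare leg of $\nu$ is an undressed two--point function between positive--mode operators that annihilate the vacuum, hence vanishes --- this is the zero block. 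The prefactor $(-1)^r$ is the reordering sign needed to present the operators in the order dictated by the block matrix. The main obstacle is purely bookkeeping: fixing the vacuum conventions, the adjoint rule $\psi_r^\dagger=\psi^*_{-r}$, and the mode shifts in the dressing so that each contraction lands on \emph{exactly} the stated hook, $h$, or $e$ --- including the shifts $m_i-s_j-1$ and $n_j-t_i-1$ --- and confirming that the fourth block is genuinely forced to vanish.

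The displayed special case is the $k=r=1$ instance, where only the two off--diagonal contractions survive and give $s_{(m|n)/(s|t)}=h_{m-s}e_{n-t}$; it also follows elementarily, since the skew diagram $(m|n)/(s|t)$ splits into a horizontal strip of $m-s$ boxes and a disjoint vertical strip of $n-t$ boxes, whose semistandard tableaux factor into those counted by $h_{m-s}$ and by $e_{n-t}$. A fermion--free argument is likewise available: expand $s_\mu(\bx,\by)=\det\big(s_{(m_i|n_j)}(\bx,\by)\big)$ by Proposition~\ref{prop:SchurHook}, substitute the two--alphabet expansion of each hook (built from the special case above), and read off the coefficient of $s_\nu(\by)=\det\big(s_{(s_i|t_j)}(\by)\big)$ by a Laplace expansion, whereupon the same bordered determinant appears.
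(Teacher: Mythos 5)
The paper gives no proof of Proposition~\ref{prop:SkewSchurHook}: it is quoted verbatim from Macdonald, so the only available comparison is with the argument indicated there. Your first route --- realizing $s_{\mu/\nu}(\bx)$ as the matrix element $\langle\nu|\Gamma_+(\bx)|\mu\rangle$, writing both states in particle--hole (Frobenius) form, conjugating $\Gamma_+$ through the creators of $|\mu\rangle$ so that arms get dressed by $h$'s and legs by signed $e$'s, and then applying Wick's theorem to the resulting correlator of $k+r$ fermions of each type --- is sound and genuinely different from Macdonald's; it is also the natural one in the context of this paper, whose Lemma~\ref{lm:Det1} and the expansions in \S\ref{sec:TwoLegged} are exactly such Wick computations. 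You identify all four blocks correctly: the $\mu$--$\mu$ contractions reproduce Proposition~\ref{prop:SchurHook} (the $r=0$ case is Giambelli), the mixed contractions select $h_{m_i-s_j}=s_{(m_i-s_j-1|0)}$ and $e_{n_j-t_i}=s_{(0|n_j-t_i-1)}$, and the $\nu$--$\nu$ contraction vanishes because the two bare modes both annihilate $\vac$ and anticommute (their indices $s_j+\frac12$ and $-t_i-\frac12$ never coincide), which forces the zero block. What you have explicitly deferred is the sign bookkeeping: the prefactors $(-1)^{\sum n_i}$ and $(-1)^{\sum t_j}$ in the Frobenius expressions for $|\mu\rangle$ and $\langle\nu|$, the alternating signs in the $e$-dressing of the $\psi^*$'s, and the operator-reordering sign must all be shown to collapse to the single overall $(-1)^r$; this is routine but is the one place such arguments typically go wrong by a sign, so it should be written out (note also that your mode labels $\psi_{-m_i-1/2}$ for creators are opposite to this paper's convention $\psi_{m_i+1/2}$). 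Your second, fermion-free route --- the two-alphabet Giambelli expansion followed by Laplace expansion against $s_\nu(\by)=\det\bigl(s_{(s_i|t_j)}(\by)\bigr)$ --- is essentially Macdonald's own proof, and your factorization argument for the special case $s_{(m|n)/(s|t)}=h_{m-s}e_{n-t}$ (the skew shape splits into a disjoint horizontal strip and vertical strip) is correct and self-contained.
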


\subsection{Specialization of symmetric functions}
Let $q^{\rho}:=(q^{-1/2}, q^{-3/2}, \dots)$.
It is easy to see that
\be
p_n(q^\rho) = \frac{1}{q^{n/2} - q^{-n/2}} = \frac{1}{[n]},
\ee
where $[n] = q^{n/2} - q^{-n/2}.$  A very interesting fact is that with this specialization the Schur functions
also have very simple expressions.

\begin{prop} \label{prop:SchurSpec} \cite{Zh4}   For any partition $\mu$, one has
$$s_\mu(q^\rho) = q^{\kappa_\mu/4}\frac{1}{\prod_{e\in \mu}[h(e)]},$$
where $h(e)$ is the hook number of $e$.
\end{prop}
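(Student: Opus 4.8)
The plan is to reduce the claim to the classical principal specialization of a Schur function and then to keep careful track of the powers of $q$. Since $q^\rho = (q^{-1/2}, q^{-3/2}, q^{-5/2}, \dots)$ I would write $x_i = q^{-1/2}\,t^{\,i-1}$ with $t = q^{-1}$, so that the variables are a geometric progression scaled by $q^{-1/2}$. As $s_\mu$ is homogeneous of degree $|\mu|$, pulling out the common scalar gives
\be
s_\mu(q^\rho) = q^{-|\mu|/2}\, s_\mu(1, t, t^2, \dots),
\ee
and everything is reduced to evaluating the principal specialization $s_\mu(1,t,t^2,\dots)$.

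For this I would invoke the classical hook-length form of the principal specialization (Macdonald, Chapter I.3),
\be
s_\mu(1, t, t^2, \dots) = \frac{t^{\,n(\mu)}}{\prod_{e\in\mu}(1 - t^{h(e)})},
\qquad n(\mu) := \sum_{i}(i-1)\mu_i.
\ee
Setting $t = q^{-1}$ and using the elementary identity $1 - q^{-h} = q^{-h/2}[h]$, each denominator factor produces a power of $q$ together with a factor $[h(e)]$, so that $\prod_{e\in\mu}(1 - q^{-h(e)}) = q^{-\half\sum_{e}h(e)}\prod_{e\in\mu}[h(e)]$. Combining this with $t^{n(\mu)} = q^{-n(\mu)}$ and the prefactor $q^{-|\mu|/2}$ yields
\be
s_\mu(q^\rho) = q^{\,-|\mu|/2 - n(\mu) + \half\sum_{e}h(e)}\,\frac{1}{\prod_{e\in\mu}[h(e)]},
\ee
so it remains only to identify the exponent with $\kappa_\mu/4$.

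This last identification is the one genuinely delicate point, and it is where I expect the care to be needed. I would use the two standard identities $\sum_{e}h(e) = |\mu| + n(\mu) + n(\mu^t)$ and $\sum_{e}c(e) = n(\mu^t) - n(\mu)$ (both elementary; see Macdonald, Chapter I.1). Substituting the first into the exponent cancels the two $|\mu|$ terms and collapses the $n(\mu)$ terms, leaving $\half\bigl(n(\mu^t) - n(\mu)\bigr) = \half\sum_{e}c(e)$, which by \eqref{eqn:Kappa} is exactly $\kappa_\mu/4$. This proves the proposition.

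If one prefers an argument that does not treat the principal specialization formula as a black box, the same conclusion can be reached from the bialternant (Weyl character) formula $s_\mu(x_1,\dots,x_N) = \det(x_i^{\mu_j+N-j})/\det(x_i^{N-j})$: specializing $x_i = q^{-(i-1/2)}$ turns numerator and denominator into determinants of $q$-powers that evaluate in closed product form, and passing to the limit $N\to\infty$ recovers the stated expression. In either route I expect the main obstacle to be purely organizational, namely matching the accumulated half-integer powers of $q$ against $\kappa_\mu$, rather than conceptual.
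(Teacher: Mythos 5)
Your argument is correct. Note that the paper itself offers no proof of this proposition; it is quoted from \cite{Zh4} as a known specialization formula, so there is nothing to compare against line by line. Your reduction to the principal specialization $s_\mu(1,t,t^2,\dots)=t^{n(\mu)}\prod_{e\in\mu}(1-t^{h(e)})^{-1}$ via homogeneity, followed by the substitution $t=q^{-1}$ and the identities $\sum_e h(e)=|\mu|+n(\mu)+n(\mu^t)$ and $\sum_e c(e)=n(\mu^t)-n(\mu)$, is the standard derivation, and the exponent bookkeeping checks out: $-\tfrac{|\mu|}{2}-n(\mu)+\tfrac12\sum_e h(e)=\tfrac12\bigl(n(\mu^t)-n(\mu)\bigr)=\tfrac12\sum_e c(e)=\kappa_\mu/4$ by \eqref{eqn:Kappa}, which is also consistent with the paper's own computation showing $\sum_e c(e)=\tfrac12\kappa_\mu$.
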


\subsection{Fermionic Fock space }

We say a set of half-integers $A = \{a_1, a_2, \dots \}\subset \mathbb{Z}+\frac{1}{2}$, $a_1>a_2> \cdots$, is admissible if it satisfies the following two conditions:
\begin{itemize}
\item[1.] $\mathbb{Z}_- + \frac{1}{2}\backslash A$ is finite and
\item[2.] $A\backslash \mathbb{Z}_- + \frac{1}{2}$ is finite,
\end{itemize}
where  $\mathbb{Z}_-$ is the set of negative integers.

Consider the linear space $W$ spanned by a basis $\{\underline{a}| a\in \mathbb{Z}+\frac{1}{2}\}$,
indexed by half-integers.
For an admissible set $A = \{a_1> a_2> \dots\}$,
we associate an element $\underline{A}\in \wedge^\infty W$ as follows:
$$\underline{A} = \underline{a_1}\wedge \underline{a_2} \wedge \cdots.$$
Then the free fermionic Fock space $\mathcal{F}$ is defined as
$$\cF = \Span \{\underline{A}: \; A\subset \mathbb{Z}+\frac{1}{2}\; \text{is admissible} \}.$$
One can define an inner product on $\mathcal{F}$ by taking
$\{\underline{A}:\; A\subset \mathbb{Z}+\frac{1}{2}\; \text{is admissible} \}$ as an orthonormal basis.

For $\underline{A} = \underline{a_1}\wedge \underline{a_2} \wedge \cdots
\in \mathcal{F}$,
define its charge as:
$$|A\backslash \mathbb{Z}_- + \frac{1}{2}| - |\mathbb{Z}_- + \frac{1}{2}\backslash A|.$$
Denote by $\cF^{(n)} \subset \mathcal{F}$ the subspace spanned by $\underline{A}$ of charge $n$,
then there is a decomposition
$$\mathcal{F} = \bigoplus_{n\in \mathbb{Z}} \cF^{(n)}.$$
An operator on $\mathcal{F}$ is called charge 0 if it preserves the above decomposition.

The charge 0 subspace $\cF^{(0)}$ has a basis indexed by partitions:
\be
|\mu\rangle:= \underline{\mu_1 - \frac{1}{2}} \wedge \underline{\mu_2 - \frac{3}{2}}\wedge \cdots \wedge
 \underline{\mu_l -\frac{2l-1}{2}}\wedge \underline{-\frac{2l+1}{2}}\wedge \cdots
\ee
where $\mu = (\mu_1, \cdots , \mu_l)$,
i.e.,
$|\mu\rangle = \underline{A_\mu}$, where $A_\mu =(\mu_i - i + \half)_{i=1, 2, \dots}$.
If $\mu = (m_1, \cdots , m_k | n_1, \cdots , n_k)$ in Frobenius notation, then
\begin{equation}
|\mu\rangle = \underline{m_1+\frac{1}{2}}\wedge \cdots \wedge \underline{m_k+\frac{1}{2}}\wedge \underline{-\frac{1}{2}}
\wedge \underline{-\frac{3}{2}} \wedge \cdots \wedge \widehat{\underline{-n_k-\frac{1}{2}}} \wedge \cdots \wedge
\widehat{\underline{-n_1-\frac{1}{2}}} \wedge \cdots .
\end{equation}
In particular,
when $\mu$ is the empty partition,
we get:
$$|0\rangle := \underline{-\frac{1}{2}}\wedge \underline{-\frac{3}{2}}\wedge \cdots \in \mathcal{F}.$$
It will be called the fermionic vacuum vector.

We now recall the creators and annihilators on $\mathcal{F}$.
For $r \in \mathbb{Z}+\frac{1}{2}$,
define operators $\psi_r$ and $\psi^*_r$ by
\begin{eqnarray*}
&\psi_r (\underline{A}) =
\begin{cases}
(-1)^{k}\underline{a_1}\wedge\cdots\wedge\underline{a_k}\wedge \underline{r}\wedge\underline{a_{k+1}}\wedge\cdots, & \text{if $a_k > r > a_{k+1}$ for some $k$}, \\
0, &  \text{otherwise};
\end{cases}\\
&\psi^*_r(\underline{A}) =
\begin{cases}
(-1)^{k+1}\underline{a_1}\wedge\cdots\wedge \widehat{\underline{a_k}}\wedge\cdots, & \text{if $a_k = r$ for some $k$}, \\
0, &  \text{otherwise}.
\end{cases}
\end{eqnarray*}
The anti-commutation relations for these operators are
\begin{equation} \label{eqn:CR}
[\psi_r,\psi^*_s]:= \psi_r\psi^*_s + \psi^*_s\psi_r = \delta_{r,s}id
\end{equation}
and other anti-commutation relations are zero.
It is clear that for $r > 0$,
\begin{align}
\psi_{-r} \vac & = 0, & \psi_r^* \vac & = 0,
\end{align}
so the operators $\{\psi_{-r}, \psi_r^*\}_{r > 0}$ are called the fermionic annihilators.
For a partition $\mu = (m_1, m_2, . . ., m_k | n_1, n_2, . . ., n_k)$, it is clear that
\be
|\mu\rangle = (-1)^{n_1 + n_2 + . . . + n_k}\prod_{i=1}^k \psi_{m_i+\frac{1}{2}} \psi_{-n_i-\frac{1}{2}}^*|0\rangle .
\ee
So the operators $\{\psi_{r}, \psi_{-r}^*\}_{r > 0}$ are called the fermionic creators.
The normally ordered product is defined as
\begin{equation*}
:\psi_r\psi^*_r: =
\begin{cases}
 \psi_r\psi^*_r, & r>0, \\
- \psi^*_r\psi_r, & r<0.
\end{cases}
\end{equation*}
In other words,
an annihilator is always put on the right of a creator.

\subsection{The boson-fermion correspondence}
For any integer $n$, define an operator $\alpha_n$ on the fermionic Fock space $\mathcal{F}$ as follows:
\begin{equation*}
\alpha_n = \sum_{r\in \mathbb{Z} + \frac{1}{2}}:\psi_r\psi^*_{r+n}:
\end{equation*}
Let
$\mathcal{B} = \Lambda[z , z^{-1}]$
be the bosonic Fock space, where $z$ is a formal variable.
Then the  boson-fermion correspondence is a linear isomorphism
$\Phi: \mathcal{F} \rightarrow \mathcal{B}$ given by
\begin{equation}
u\mapsto z^m \langle\underline{0}_m | e^{\sum_{n=1}^\infty \frac{p_n}{n}\alpha_n}u\rangle ,\ \ u\in \cF^{(m)}
\end{equation}
where $|\underline{0}_m\rangle = \underline{-\frac{1}{2}+m}\wedge \underline{-\frac{3}{2}+m}\wedge\cdots$.
It is clear that $\Phi$ induces an isomorphism between $\cF^{(0)}$ and $\Lambda$. Explicitly, this isomorphism is given by
\begin{equation}\label{boson-fermion}
|\mu\rangle \longleftrightarrow s_\mu.
\end{equation}

The boson-fermionic correspondence plays an important role in Kyoto school's theory
of integrable hierarchies.
For example,

\begin{prop}\label{bilinear relation tau fermion}
If $\tau\in \Lambda$ corresponds to $|v\rangle\in F^{(0)}$ under the boson-fermion correspondence, then $\tau$ is a $tau$-function of the KP
hierarchy in the Miwa variable $t_n = \frac{p_n}{n}$ if and only if $|v\rangle$ satisfies the bilinear relation
\begin{equation}\label{bilinear relation tau fermion 1}
\sum_{r\in \mathbb{Z} + \frac{1}{2}}\psi_r |v\rangle\otimes \psi^*_r |v\rangle = 0.
\end{equation}
\end{prop}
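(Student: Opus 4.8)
The plan is to push the fermionic bilinear relation \eqref{bilinear relation tau fermion 1} through the boson-fermion correspondence $\Phi$ and recognize the outcome as the generating (Hirota) bilinear identity that characterizes KP tau-functions. First I would assemble the creators and annihilators into fermionic fields
\[
\psi(k) = \sum_{r\in\mathbb{Z}+\half}\psi_r\, k^{r}, \qquad
\psi^*(k) = \sum_{r\in\mathbb{Z}+\half}\psi^*_r\, k^{-r},
\]
so that \eqref{bilinear relation tau fermion 1} is exactly the statement that the coefficient of $k^{-1}$ in $\psi(k)|v\rangle\otimes\psi^*(k)|v\rangle$ vanishes, i.e. $\mathrm{Res}_{k}\,\psi(k)|v\rangle\otimes\psi^*(k)|v\rangle=0$. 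Since $|v\rangle\in\cF^{(0)}$, the vectors $\psi(k)|v\rangle$ and $\psi^*(k)|v\rangle$ have charges $+1$ and $-1$, so I must apply the charge $+1$ and charge $-1$ components of $\Phi$ to the two tensor factors.

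Second, I would invoke the vertex-operator realization of the fields under $\Phi$. Writing $t_n=p_n/n$, the standard Kyoto-school computation gives, up to conventional $k$-power and sign normalizations,
\[
\Phi\,\psi(k)\,\Phi^{-1} = z\,\exp\Big(\sum_{n\ge 1}t_n k^{n}\Big)\exp\Big(-\sum_{n\ge 1}\tfrac{1}{n}\tfrac{\partial}{\partial t_n}k^{-n}\Big),
\]
\[
\Phi\,\psi^*(k)\,\Phi^{-1} = z^{-1}\exp\Big(-\sum_{n\ge 1}t_n k^{n}\Big)\exp\Big(\sum_{n\ge 1}\tfrac{1}{n}\tfrac{\partial}{\partial t_n}k^{-n}\Big),
\]
where $z$ is the charge-shift operator that moves $\cF^{(0)}$ to $\cF^{(\pm 1)}$. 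The point is that the annihilation exponentials act on a tau-function as Miwa shifts, $\tau(\mathbf{t})\mapsto\tau(\mathbf{t}\mp[k^{-1}])$ with $[k^{-1}]=(k^{-1},\tfrac12 k^{-2},\tfrac13 k^{-3},\dots)$, while the creation exponentials contribute the prefactors $e^{\pm\sum_n t_n k^n}$.

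Third, I would apply $\Phi\otimes\Phi$ to the residue identity, using two independent sets of times $\mathbf{t}$ and $\mathbf{t}'$ for the two factors, with the $z$, $z^{-1}$ shifts accounting for the charges $\pm 1$. Collecting the exponential prefactors and Miwa shifts and taking the residue yields
\[
\mathrm{Res}_{k}\; e^{\sum_{n\ge 1}(t_n-t'_n)k^{n}}\,\tau(\mathbf{t}-[k^{-1}])\,\tau(\mathbf{t}'+[k^{-1}]) = 0,
\]
which is precisely the bilinear identity defining KP tau-functions; the substitution $\mathbf{t}\to\mathbf{t}-\mathbf{y}$, $\mathbf{t}'\to\mathbf{t}+\mathbf{y}$ and expansion in $\mathbf{y}$ reproduces the full Hirota hierarchy. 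Because $\Phi$ is an isomorphism and each manipulation is reversible, the implication runs both ways, giving the claimed equivalence.

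The main obstacle is the careful derivation and bookkeeping of the vertex-operator conjugation formulas above: tracking the charge-shift operator $z$ (forced on us because $\psi$ and $\psi^*$ change charge by $\pm1$, so $\Phi$ is really being used on $\cF^{(1)}$ and $\cF^{(-1)}$), pinning down the exact $k$-power prefactors and signs, and then checking that the residue of the product of vertex operators collapses cleanly to the Miwa-shifted expression. Once these normalizations are fixed, identifying the result as the standard KP bilinear identity, and hence the equivalence with being a tau-function in the variables $t_n=p_n/n$, is routine.
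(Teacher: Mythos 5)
The paper does not actually prove this proposition: it is recalled as a classical fact of the Kyoto school (Date--Jimbo--Kashiwara--Miwa; see also Kac--van de Leur \cite{KL}), stated without proof and immediately followed by the equally classical remark that \eqref{bilinear relation tau fermion 1} characterizes the $\widehat{GL_\infty}$-orbit of the vacuum. So there is nothing in the paper to compare your argument against line by line. On its own merits, your proposal is the standard and correct derivation: package the bilinear relation as a residue of $\psi(k)|v\rangle\otimes\psi^*(k)|v\rangle$, conjugate by $\Phi$ using the vertex-operator realization of the fermion fields, and read off the Hirota bilinear identity $\mathrm{Res}_k\, e^{\sum_n (t_n-t_n')k^n}\tau(\mathbf{t}-[k^{-1}])\tau(\mathbf{t}'+[k^{-1}])=0$; invertibility of $\Phi\otimes\Phi$ (restricted to $\cF^{(1)}\otimes\cF^{(-1)}$, where the bilinear expression lives) gives the converse. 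One small point to fix when writing this up: with the expansions $\psi(k)=\sum_r\psi_r k^r$, $\psi^*(k)=\sum_r\psi^*_r k^{-r}$ as you wrote them, the diagonal terms $r=s$ sit in the coefficient of $k^0$, not $k^{-1}$, so either shift the mode expansion by $k^{-1/2}$ in each field (the usual convention) or extract the constant term rather than the residue; this is exactly the normalization bookkeeping you already flag as the remaining work, and it does not affect the structure of the argument.
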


A state $|v\rangle\in \cF^{(0)}$ satisfies the bilinear relation
\eqref{bilinear relation tau fermion 1} if and only if it lies in the orbit $\widehat{GL_\infty}|0\rangle$.
There is also a multi-component generalization of the boson-fermion correspondence
which can be used to study multi-component KP hierarchies \cite{KL}.

\section{The ADKMV Conjecture and Its Framed Generalization}

\subsection{The topological vertex}
The topological vertex introduced in \cite{AKMV} is defined by
\begin{eqnarray} \label{eqn:TV}
W_{\mu^1, \mu^2, \mu^3}(q)
= \sum_{\rho^1, \rho^3}c_{\rho^1(\rho^3)^t}^{\mu^1(\mu^3)^t}q^{\kappa_{\mu^2}/2+\kappa_{\mu^3}/2}
\frac{W_{(\mu^2)^t\rho^1}(q)W_{\mu^2(\rho^3)^t}(q)}{W_{\mu^2\emptyset}(q)},
\end{eqnarray}
where
$$ c_{\rho^1(\rho^3)^t}^{\mu^1(\mu^3)^t}
= \sum_{\eta} c_{\eta\rho^1}^{\mu^1}c_{\eta(\rho^3)^t}^{(\mu^3)^t}.
$$
It can also be rewritten as follows (see e.g. \cite{Zh4}):
\be \label{eqn:WSkewSchur}
W_{\mu^1, \mu^2, \mu^3}(q)
= (-1)^{|\mu^2|} q^{\kappa_{\mu^3}/2}
s_{(\mu^2)^t}(q^{-\rho}) \sum_{\eta}
s_{\mu^1/\eta}(q^{(\mu^2)^t+\rho})
s_{(\mu^3)^t/\eta}(q^{\mu^2+\rho}).
\ee
The framed topological vertex in framing $(a_1, a_2, a_3)$ is given by:
\be
W^{(a_1, a_2, a_3)}_{\mu^1, \mu^2, \mu^3}(q) = q^{a_1\kappa_{\mu^1}/2+a_2\kappa_{\mu^2}/2+a_3\kappa_{\mu^3}/2}
W_{\mu^1, \mu^2, \mu^3}(q).
\ee
Let
\be
Z^{(a_1, a_2, a_3)}(q; \bx^1;\bx^2;\bx^3) = \sum_{\mu^{1},\mu^{2},\mu^{3}}
W^{(a_1,a_2,a_3)}_{\mu^1, \mu^2, \mu^3}(q) s_{\mu^1}(\bx^1) s_{\mu^2}(\bx^2) s_{\mu^3}(\bx^3).
\ee
Even though the topological vertex is presented here in its combinatorial expression,
its significance lies in its geometric origin as open Gromov-Witten invariants.
In the mathematical theory of the topological vertex \cite{LLLZ},
the open Gromov-Witten invariants are defined by localizations
on relative moduli spaces.
This leads to some special Hodge integrals on the Deligne-Mumford moduli spaces,
whose generating series can be shown to be $Z^{(a_1, a_2, a_3)}(q; \bx^1;\bx^2;\bx^3)$.
Closed and open Gromov-Witten invariants of local toric Calabi-Yau $3$-folds
can be obtained from the topological vertex by suitable gluing process.

\subsection{The ADMKV Conjecture}\label{sec:ADKMV}

It is conjectured in \cite{AKMV} and \cite{ADKMV} that the topological vertex
has a simple expression in the fermionic picture as follows.
On the three-component femionic Fock space $\mathcal{F} \otimes \mathcal{F} \otimes \mathcal{F}$,
define for $i=1, 2, 3$ operators $\psi^i_r$ and $\psi^{i*}_r$, $r \in {\mathbb Z} + \half$.
They act on the $i$-th factor of the tensor product
as the operators $\psi_r$ and$\psi_r^*$ respectively,
and we use the Koszul sign convention for the anti-commutation relations of these operators, i.e., we set
$$[\psi^i_r , \psi^j_s]=[\psi^i_r ,\psi^{j*}_s]=[\psi^{i*}_r , \psi^{j*}_s] =0$$
for $i\neq j$ and $r , s \in {\mathbb Z} + \half$.
Let
\be
\psi^{ij}_{mn} := \psi^i_{m+\half} \psi^{j*}_{-n-\half}.
\ee
Let $|\mu^1, \mu^2, \mu^3\rangle = |\mu^1\rangle \otimes |\mu^2\rangle \otimes |\mu^3\rangle
\in \cF^{(0)} \otimes \cF^{(0)} \otimes \cF^{(0)}$.
Then the ADKMV Conjecture states that
\be
W_{\mu^1, \mu^2, \mu^3}(q)
= \langle \mu^1, \mu^2, \mu^3| \exp \biggl( \sum_{\substack{ i,j=1,2,3\\ m,n\geq 0}}
A_{mn}^{ij}(q) \psi^{ij}_{mn} \biggr) \vac \otimes \vac \otimes \vac,
\ee
where for $i=1,2,3$,
\bea
&& A_{mn}^{ii}(q)  = (-1)^n  \frac{q^{m(m+1)/4-n(n+1)/4}}{[m+n+1][m]![n]!}, \\
&& A_{mn}^{i(i+1)}(q)  = (-1)^n q^{m(m+1)/4-n(n+1)/4+1/6}
\sum_{l=0}^{\min(m,n)} \frac{q^{(l+1)(m+n-l)/2}}{[m-l]![n-l]!}, \\
&& A_{mn}^{i(i-1)}(q)  = (-1)^{n+1} q^{-m(m+1)/4+n(n+1)/4-1/6}
\sum_{l=0}^{\min(m,n)} \frac{q^{-(l+1)(m+n-l)/2}}{[m-l]![n-l]!}.
\eea
Here it is understood that $A^{34}_{mn} = A^{31}_{mn}$ and $A^{10}_{mn} =A^{13}_{mn}$.
This is very surprising because in the bosonic picture the expression for the topological vertex is very complicated.

\subsection{The Framed ADMKV Conjecture}\label{sec:Framed}
We make the following generalization of the above ADKMV Conjecture to the framed topological vertex:
\be \label{eqn:FramedADKMV}
W^{(\ba)}_{\mu^1, \mu^2, \mu^3}(q)
= \langle \mu^1, \mu^2, \mu^3| \exp  \biggl( \sum_{\substack{ i,j=1,2,3\\ m,n\geq 0}}
A_{mn}^{ij}(q;\ba) \psi_{mn}^{ij} \biggr)  \vac \otimes \vac \otimes \vac
\ee
for $A^{ij}_{mn}(q;\ba)$ similar to $A^{ij}_{mn}(q)$ above:
\bea\label{framed coefficients 1}
&& A^{ii}_{mn}(q;\ba)
       = (-1)^n q^{(2a_i+1)(m(m+1)-n(n+1))/4}\frac{1}{[m+n+1][m]![n]!}, \\
\label{framed coefficients 2}
&& A_{mn}^{i(i+1)}(q;\ba) = (-1)^n q^{\frac{(2a_i+1)m(m+1)-(2a_{i+1}+1)n(n+1)}{4}+\frac{1}{6}}\sum_{l=0}^{\min(m , n)}
 \frac{q^{\frac{1}{2}(l+1)(m+n-l)}}{[m-l]![n-l]!},\\
 \label{framed coefficients 3}
&& A_{mn}^{i(i-1)}(q;\ba) = -(-1)^n q^{\frac{(2a_i+1)m(m+1)-(2a_{i-1}+1)n(n+1)}{4}-\frac{1}{6}}\sum_{l=0}^{\min(m , n)}
 \frac{q^{-\frac{1}{2}(l+1)(m+n-l)}}{[m-l]![n-l]!}.
\eea
Here $\ba = a_1, a_2, a_3$.
We refer to this conjecture as the Framed ADKMV Conjecture.

We derive $A^{ij}_{m,n}(q;\ba)$ by the same method as for the derivation of $A^{ij}_{mn}(q)$
in \cite[\S 5.11]{ADKMV}.
For details, see \S \ref{sec:A}.
It is surprising that there is only little difference between them.

A straightforward application of the ADKMV Conjecture and the Framed ADKMV Conjecture is that
they establish a connection between the topological vertex and integrable hierarchies as pointed out in \cite{ADKMV}.

\section{Proof of The One-Legged Case}\label{sec:OneLegged}

In this section,
as a warm up exercise
we will derive a fermionic representation of the framed one-legged topological vertex,
hence establishing the one-legged case of the Framed ADKMV Conjecture.

\subsection{The framed one-legged topological vertex in terms of Schur functions}
The generating functional of the Gromov-Witten invariants of $\mathbb{C}^3$ with one brane is
encoded in $W^{(a,0,0)}_{\mu, (0), (0)}$.
It is also the generating function of certain Hodge integrals on the moduli spaces of pointed stable curves.
Let
\be
Z^{(a)}(q; \bx) = \sum_{\mu}
W^{(a,0,0)}_{\mu, (0), (0)}(q) s_{\mu}(\bx).
\ee
By (\ref{eqn:WSkewSchur}) one then has:
\be
Z^{(a)}(q;\bx) = \sum_{\mu} q^{a \kappa_\mu /2} s_\mu (q^\rho) s_\mu(\bx).
\ee
By \eqref{boson-fermion},
this corresponds to an element $V^{(a)}(q)$ in the charge 0 ferminonic Fock subspace $\cF^{(0)}$:
\be \label{eqn:V(q,a)}
V^{(a)}(q) = \sum_{\mu}q^{a\kappa_\mu/2} s_\mu (q^\rho) |\mu\rangle.
\ee
\subsection{Proof of the one-legged case of the Framed ADKMV Conjecture}

By the Framed ADKMV Conjecture we should have
\be \label{eqn:ADKMV1}
V^{(a)}(q) = \exp(\sum_{m , n = 0}^{\infty}A_{mn}(q;a) \psi_{m+\frac{1}{2}}\psi_{-n-\frac{1}{2}}^*)|0\rangle
\ee
for some $A_{mn}(q;a)$.

\begin{lem} \label{lm:Det1}
The following identity holds:
\be
\begin{split}
& \exp(\sum_{m , n = 0}^{\infty}A_{mn}(q;a)  \psi_{m+\frac{1}{2}}\psi_{-n-\frac{1}{2}}^*)|0\rangle\\
=&\sum_{\mu=(m_1, \dots, m_k|n_1, \dots, n_k)} (-1)^{n_1 + . . . + n_k} \det(A_\mu)|\mu\rangle,
\end{split}
\ee
where $(A_\mu) = (A_{m_i n_j}(q;a))_{1\leq i , j \leq k }$ if $\mu = (m_1, m_2, . . ., m_k| n_1, n_2, . . ., n_k)$.
\end{lem}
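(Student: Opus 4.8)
The plan is to expand the exponential into a product of commuting elementary factors, apply it to $\vac$, and read off the coefficient of each basis vector $|\mu\rangle$. First I would record the algebraic properties of the operators $B_{mn} := \psi_{m+\half}\psi^*_{-n-\half}$, $m,n\geq 0$. Since $m+\half > 0 > -n-\half$, the anti-commutation relations \eqref{eqn:CR} force $\psi_{m+\half}$ and $\psi^*_{-n-\half}$ to anti-commute; a short computation then shows that any two of these operators commute, $B_{mn}B_{m'n'} = B_{m'n'}B_{mn}$, and that each squares to zero, $B_{mn}^2 = 0$ (the latter because $\psi_{m+\half}^2 = 0$). Hence $\exp(A_{mn}(q;a)B_{mn}) = 1 + A_{mn}(q;a)B_{mn}$ and, by commutativity,
\be
\exp\biggl(\sum_{m,n=0}^\infty A_{mn}(q;a)B_{mn}\biggr)
= \prod_{m,n=0}^\infty \bigl(1 + A_{mn}(q;a)B_{mn}\bigr).
\ee

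Expanding the product and applying it to $\vac$, a generic term is $\prod_{(m,n)\in S}A_{mn}(q;a)B_{mn}$ for a finite index set $S$. Because $\psi_{m+\half}^2 = 0$ and $(\psi^*_{-n-\half})^2 = 0$, such a term vanishes unless the first coordinates of the pairs in $S$ are pairwise distinct and the second coordinates are pairwise distinct. In that case $S = \{(m_i, n_{\sigma(i)})\}_{i=1}^k$ for some $m_1 > \cdots > m_k \geq 0$, some $n_1 > \cdots > n_k \geq 0$, and some permutation $\sigma$ of $\{1, \dots, k\}$, and it produces the basis vector $|\mu\rangle$ with $\mu = (m_1, \dots, m_k | n_1, \dots, n_k)$. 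Thus each $|\mu\rangle$ collects exactly the $k!$ contributions indexed by the pairings $\sigma$ between its arm parts and leg parts.

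The crux is the Koszul sign carried by each pairing. In $\prod_{i=1}^k \psi_{m_i+\half}\psi^*_{-n_{\sigma(i)}-\half}$ the cost of moving all the $\psi$'s to the left of all the $\psi^*$'s is independent of $\sigma$, so relative to the ordered product $\prod_{i=1}^k \psi_{m_i+\half}\psi^*_{-n_i-\half}$ the only extra sign comes from reordering the $\psi^*$'s, which contributes precisely $\mathrm{sgn}(\sigma)$. The ordered product equals $(-1)^{n_1+\cdots+n_k}|\mu\rangle$ by the creation-operator expression for $|\mu\rangle$ recorded in the preliminaries. Summing over all pairings then assembles the coefficient of $|\mu\rangle$ into
\be
(-1)^{n_1+\cdots+n_k}\sum_{\sigma}\mathrm{sgn}(\sigma)\prod_{i=1}^k A_{m_i n_{\sigma(i)}}(q;a)
= (-1)^{n_1+\cdots+n_k}\det(A_\mu),
\ee
which is exactly the asserted identity. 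The one genuinely delicate step is this sign bookkeeping: one must verify that the anti-commutations accumulate to $\mathrm{sgn}(\sigma)$ for a nontrivial pairing, so that the sum over pairings closes up into a determinant rather than a permanent. Everything else is the formal manipulation of the exponential, and in particular no use is made of the explicit values of $A_{mn}(q;a)$, so the lemma holds for arbitrary coefficients.
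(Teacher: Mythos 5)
Your proof is correct and follows essentially the same route as the paper's: expand the exponential of the commuting, square-zero operators $\psi_{m+\half}\psi^*_{-n-\half}$ into a product of linear factors, match the surviving terms with Frobenius data $(m_1,\dots,m_k|n_1,\dots,n_k)$ together with a pairing $\sigma$, and collect the $\mathrm{sgn}(\sigma)$ contributions into $\det(A_\mu)$ times the sign $(-1)^{n_1+\cdots+n_k}$ from the creation-operator expression for $|\mu\rangle$. Your sign bookkeeping is in fact spelled out in more detail than in the paper, which simply asserts the outcome of the reordering.
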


\begin{proof}
By the commutation relations (\ref{eqn:CR}),
operators $\{\psi_{m+\frac{1}{2}}\psi_{-n-\frac{1}{2}}^*\}_{m,n\geq 0}$ commute with each other
and their squares are all the $0$-operator.
Therefore,
one has:
\ben
&& \exp(\sum_{m , n = 0}^{\infty}A_{mn}(q;a)  \psi_{m+\frac{1}{2}}\psi_{-n-\frac{1}{2}}^*)|0\rangle\\
&=&\prod_{m,n \geq 0}(1 + A_{mn}(q;a)  \psi_{m+\frac{1}{2}}\psi_{-n-\frac{1}{2}}^*)|0\rangle\\
&=&\sum_{\mu=(m_1, \dots, m_k|n_1, \dots, n_k)} \sum_{\sigma \in S_k} sign(\sigma)
\prod_{i=1}^k A_{m_in_{\sigma(i)}}(q;a) \cdot \prod_{i=1}^k (\psi_{m_k+\half}\psi^*_{-n_k-\half})
|0\rangle \\
&=&\sum_{\mu=(m_1, \dots, m_k|n_1, \dots, n_k)} (-1)^{n_1 + . . . + n_k} \det(A_\mu)|\mu\rangle.
\een
\end{proof}

Set $\mu = (m|n)$. If we assume (\ref{eqn:ADKMV1}),
we must have
\be
(-1)^nA_{mn}(q;a) = \langle (m|n)|V(q;a) = q^{a \kappa_{(m|n)} /2} s_{(m|n)} (q^\rho) .
\ee
By Lemma \ref{lm:Det1}, Lemma \ref{lm:kappa} and Proposition \ref{prop:SchurSpec},
\begin{equation}\label{Amn}
\begin{split}
A_{mn}(q;a) &= (-1)^n q^{a \kappa_{(m|n)}/2}s_{(m|n)}(q^\rho) \\
       &= (-1)^n q^{\frac{(m-n)(m+n+1)(2a+1)}{4}}\frac{1}{[m+n+1][m]![n]!}.
\end{split}
\end{equation}

\begin{thm}
In the case of one-legged topological vertex,
the Framed ADKMV Conjecture holds for the above $A_{mn}(q;a)$.
\end{thm}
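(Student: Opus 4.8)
The plan is to verify the identity \eqref{eqn:ADKMV1} by comparing, for every partition $\mu$, the coefficient of $|\mu\rangle$ on the two sides. The right-hand side has already been unpacked by Lemma \ref{lm:Det1}: expanding the exponential produces $\sum_\mu (-1)^{n_1+\cdots+n_k}\det(A_\mu)\,|\mu\rangle$, where $\mu=(m_1,\dots,m_k|n_1,\dots,n_k)$ and $(A_\mu)=(A_{m_in_j}(q;a))_{1\le i,j\le k}$. Matching this against the definition \eqref{eqn:V(q,a)} of $V^{(a)}(q)$, the theorem reduces to the single scalar identity
\be
(-1)^{n_1+\cdots+n_k}\det(A_\mu)=q^{a\kappa_\mu/2}s_\mu(q^\rho)
\ee
for each $\mu$. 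The hook case $k=1$ holds by construction, since that is precisely how $A_{mn}(q;a)$ was fixed in \eqref{Amn}; the content of the theorem is to promote this to arbitrary $k$.

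The main step is to factor the determinant. Inserting the defining relation $A_{m_in_j}(q;a)=(-1)^{n_j}q^{a\kappa_{(m_i|n_j)}/2}s_{(m_i|n_j)}(q^\rho)$ and using the hook evaluation $\kappa_{(m|n)}=m(m+1)-n(n+1)$ from Lemma \ref{lm:kappa}, the $(i,j)$ entry splits as a row factor $q^{am_i(m_i+1)/2}$, a column factor $(-1)^{n_j}q^{-an_j(n_j+1)/2}$, and the residual entry $s_{(m_i|n_j)}(q^\rho)$. Pulling the row and column factors out of the determinant leaves
\be
\det(A_\mu)=(-1)^{n_1+\cdots+n_k}\,q^{a\left(\sum_i m_i(m_i+1)-\sum_j n_j(n_j+1)\right)/2}\,\det\bigl(s_{(m_i|n_j)}(q^\rho)\bigr).
\ee

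It then remains only to recognize the two surviving factors. Specializing the determinantal formula of Proposition \ref{prop:SchurHook} at $\bx=q^\rho$ gives $\det\bigl(s_{(m_i|n_j)}(q^\rho)\bigr)=s_\mu(q^\rho)$, while Lemma \ref{lm:kappa} identifies the exponent as exactly $a\kappa_\mu/2$. Substituting these and cancelling $(-1)^{2(n_1+\cdots+n_k)}=1$ yields the required identity. I expect no genuine obstacle here: once Lemma \ref{lm:Det1} has converted the exponential into a sum of determinants, the remainder is bookkeeping, the only real inputs being the additivity of the framing exponent $\kappa_\mu$ over Frobenius hooks (Lemma \ref{lm:kappa}) and the fact that the hook-Schur determinant specializes to $s_\mu(q^\rho)$ (Proposition \ref{prop:SchurHook}). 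The whole point is that $A_{mn}(q;a)$ was defined from the hook case precisely so that these two facts conspire to reproduce the general coefficient.
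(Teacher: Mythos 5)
Your proposal is correct and follows essentially the same route as the paper: reduce to a coefficient identity via Lemma \ref{lm:Det1}, then factor the row and column powers of $q$ and the signs out of $\det(A_\mu)$ so that Proposition \ref{prop:SchurHook} and the hook additivity of $\kappa_\mu$ from Lemma \ref{lm:kappa} finish the computation. The paper merely runs the same determinant manipulation in the opposite direction, starting from $q^{a\kappa_\mu/2}s_\mu(q^\rho)$ and absorbing the factors into the matrix entries.
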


\begin{proof}
For $\mu = (m_1, m_2, . . ., m_k | n_1, n_2, . . ., n_k)$, by Proposition \ref{prop:SchurHook},
Lemma \ref{lm:kappa} and \eqref{Amn}, we get
\ben
q^{a\kappa_\mu /2}s_\mu(q^\rho)
& = & q^{\sum_{i=1}^{k}a m_i(m_i+1) /2- \sum_{j=1}^{k}a n_j(n_j+1) /2} \cdot \det(s_{(m_i|n_j)}(q^\rho))_{i,j=1, \dots, k}\\
& = & q^{\sum_{i=1}^k am_i(m_i+1)/2}  \cdot \det(q^{-an_j(n_j+1)/2} \cdot s_{(m_i|n_j)}(q^\rho))_{i,j=1, \dots, k} \\
& = & \det(q^{am_i(m_i+1)/2} q^{-an_j(n_j+1)/2} s_{(m_i|n_j)}(q^\rho))_{i,j=1, \dots, k} \\
& = &(-1)^{n_1 + . . . + n_k} \det(A_{m_in_j})_{i,j=1, \dots, k}.
\een
The proof is completed by Lemma \ref{lm:Det1} and \eqref{eqn:ADKMV1}.
\end{proof}

For later reference,
note we have proved the following identity:
\be \label{eqn:Va}
V^{(a)}(q) =  \sum_{\mu=(m_1, \dots, m_k|n_1, \dots, n_k)}
\det \big(q^{a\kappa_{(m_i|n_j)}/2} s_{(m_i|n_j)}(q^\rho)\big)_{1 \leq i, j \leq k}|\mu\rangle.
\ee

\section{Proof of The Two-Legged Case}\label{sec:TwoLegged}

\subsection{The framed two-legged topological vertex in terms of skew Schur functions}
The framed two-legged topological vertex
encodes the open Gromov-Witten invariants of  $\mathbb{C}^3$ with two branes:
\be \label{eqn:Z2Legged}
Z^{(a_1, a_2)}(q; \bx; \by)
= \sum_{\mu^1, \mu^2} W_{\mu^1, \mu^2, (0)}^{(a_1, a_2,0)}(q) s_{\mu^1}(\bx) s_{\mu^2}(\by).
\ee
Recall the following identity proved in \cite{Zh4}:
\be
W_{\mu^1, \mu^2, (0)}(q)
= q^{\kappa_{\mu^2}/2} W_{\mu^1, (\mu^2)^t}(q).
\ee
The following identity proved in \cite{Zh1} will play a key role below:
\be \label{eqn:Wmunu}
W_{\mu, \nu}(q)
= (-1)^{|\mu|+|\nu|}
q^{\frac{\kappa_{\mu}+\kappa_{\nu}}{2}}
\sum_{\eta} s_{\mu/\eta}(q^{-\rho})s_{\nu/\eta}(q^{-\rho}).
\ee
Based on this formula,
the following formula is proved in \cite{Zh4}:
\be
W_{(\mu^1)^t, (\mu^2)^t}(q^{-1}) = (-1)^{|\mu^1|+|\mu^2|} W_{\mu^1, \mu^2}(q). \label{eqn:Wtt}
\ee
Therefore, \eqref{eqn:Z2Legged} can be rewritten as follows:
\be\label{eqn:2Legged}
Z^{(a_1,a_2)}(q;\bx;\by)
= \sum_{\mu, \nu}\left(q^{\frac{(a_1+1)\kappa_\mu+a_2\kappa_\nu}{2}}
\sum_{\eta}s_{\mu^t/\eta}(q^\rho)s_{\nu/\eta}(q^\rho)\right )s_\mu(\bx) s_\nu(\by).
\ee
By the boson-fermion correspondence \eqref{boson-fermion},
this corresponds to the following element in the femionic picture:
\be \label{eqn:W2Legged}
V^{(a_1,a_2)}(q) = \sum_{\mu, \nu}\left(q^{\frac{(a_1+1)\kappa_\mu+a_2\kappa_\nu}{2}}\sum_{\eta}s_{\mu^t/\eta}(q^\rho)s_{\nu/\eta}(q^\rho)\right ) |\mu\rangle\otimes |\nu\rangle.
\ee
Using this we will prove the two-legged case of the Framed ADKMV Conjecture.

\begin{thm} \label{thm:Main}
There is an operator
$$T(q;a_1,a_2) = \exp(\sum_{i , j = 1, 2}\sum_{m , n = 0}^{\infty} A^{ij}_{mn}(q;a_1,a_2)\psi^{i}_{m+\frac{1}{2}}\psi^{j*}_{-n-\frac{1}{2}})$$
where the coefficients $A_{mn}^{ij}(q;a_1,a_2)$,  for $ m , n \geq 0$ and $i ,\ j = 1 , 2$,
are given by \eqref{eqn:Aii}, \eqref{eqn:A12} and \eqref{eqn:A21} below,
such that
\be
W_{\mu, \nu, (0)}^{(a_1,a_2,0)}(q) = \langle\mu, \nu|T(q;a_1,a_2) \vac \otimes \vac.
\ee
\end{thm}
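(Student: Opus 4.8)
The plan is to recast Theorem~\ref{thm:Main} as an equality of states in $\cF^{(0)}\otimes\cF^{(0)}$. By \eqref{eqn:W2Legged} the number $W^{(a_1,a_2,0)}_{\mu,\nu,(0)}(q)$ is exactly the coefficient $\langle\mu,\nu|V^{(a_1,a_2)}(q)$, so the theorem is equivalent to the operator identity $V^{(a_1,a_2)}(q)=T(q;a_1,a_2)\,\vac\otimes\vac$. It therefore suffices to compute the coefficient of each basis vector $|\mu\rangle\otimes|\nu\rangle$ on the right-hand side and to match it with the bosonic coefficient read off from \eqref{eqn:W2Legged}, namely $q^{((a_1+1)\kappa_\mu+a_2\kappa_\nu)/2}\sum_\eta s_{\mu^t/\eta}(q^\rho)s_{\nu/\eta}(q^\rho)$.

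First I would establish the two-component analogue of Lemma~\ref{lm:Det1}. The four families of operators $\psi^1_{m+\half},\psi^{1*}_{-n-\half},\psi^2_{m+\half},\psi^{2*}_{-n-\half}$ ($m,n\ge0$) are all creators, and by \eqref{eqn:CR} together with the Koszul convention they anticommute pairwise: the only potentially nonzero anticommutators are of the form $\{\psi^i_{m+\half},\psi^{i*}_{-n-\half}\}=\delta_{m+\half,-n-\half}=0$. Hence the bilinears $\psi^i_{m+\half}\psi^{j*}_{-n-\half}$ commute with one another and square to zero, so $T(q;a_1,a_2)$ expands as the finite product $\prod_{i,j,m,n}\bigl(1+A^{ij}_{mn}\psi^i_{m+\half}\psi^{j*}_{-n-\half}\bigr)$. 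Selecting from this product the terms that build $|\mu\rangle\otimes|\nu\rangle$ amounts to choosing a bijection between the particle slots $\{(1,m_a)\}\cup\{(2,p_c)\}$ and the hole slots $\{(1,n_b)\}\cup\{(2,q_d)\}$, where $\mu=(m_1,\dots,m_k|n_1,\dots,n_k)$ and $\nu=(p_1,\dots,p_r|q_1,\dots,q_r)$; reordering the fermions into the standard Frobenius order produces the sign of the permutation, giving
\be
\langle\mu,\nu|T(q;a_1,a_2)\,\vac\otimes\vac
=\epsilon_{\mu\nu}\,\det\begin{pmatrix}
\bigl(A^{11}_{m_a n_b}\bigr) & \bigl(A^{12}_{m_a q_d}\bigr)\\[2pt]
\bigl(A^{21}_{p_c n_b}\bigr) & \bigl(A^{22}_{p_c q_d}\bigr)
\end{pmatrix},
\ee
a $(k+r)\times(k+r)$ determinant with $\epsilon_{\mu\nu}=(-1)^{\sum_b n_b+\sum_d q_d}$ the Frobenius sign already seen in Lemma~\ref{lm:Det1}.

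It remains to identify this determinant with the bosonic coefficient. As in the one-legged case, the framing prefactor $q^{((a_1+1)\kappa_\mu+a_2\kappa_\nu)/2}$ factors through the determinant: by Lemma~\ref{lm:kappa} it distributes as $q^{(a_1+1)m_a(m_a+1)/2}$, $q^{a_2 p_c(p_c+1)/2}$ on the rows and $q^{-(a_1+1)n_b(n_b+1)/2}$, $q^{-a_2 q_d(q_d+1)/2}$ on the columns, which is exactly the framing built into the stated coefficients $A^{ij}_{mn}(q;a_1,a_2)$. This reduces the claim to an unframed core identity $\sum_\eta s_{\mu^t/\eta}(q^\rho)s_{\nu/\eta}(q^\rho)=\epsilon_{\mu\nu}\det(B^{ij})$ for the framing-stripped blocks $B^{ij}$. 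To produce the right-hand side I would write each skew Schur function as a determinant of hook Schur functions via Proposition~\ref{prop:SkewSchurHook}, evaluate those hooks at $q^\rho$ by Proposition~\ref{prop:SchurSpec}, and then apply a Cauchy--Binet (Laplace) expansion so that the summation over the Frobenius data of the intermediate partition $\eta$ collapses into a single $(k+r)\times(k+r)$ determinant. Under this expansion the diagonal blocks reproduce the ``unlinked'' hook values $s_{(m|n)}(q^\rho)$ exactly as in \eqref{eqn:Va}, while the off-diagonal blocks carry the arm/leg contributions $h_{m-l},e_{n-l}$ coming from the special case $s_{(m|n)/(s|t)}=h_{m-s}e_{n-t}$ of Proposition~\ref{prop:SkewSchurHook}; the internal sum $\sum_l$ appearing in $A^{12}_{mn}$ and $A^{21}_{mn}$ is precisely the residual hook summation left after Cauchy--Binet.

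The hard part is exactly this last matching. Turning the bosonic sum $\sum_\eta s_{\mu^t/\eta}(q^\rho)s_{\nu/\eta}(q^\rho)$ into a single determinant requires a careful Cauchy--Binet bookkeeping in which $\eta$ is encoded through its Frobenius coordinates, and one must simultaneously control three sources of signs and of powers of $q$: the Frobenius sign $\epsilon_{\mu\nu}$ and the $(-1)^r$ of Proposition~\ref{prop:SkewSchurHook}, the $(-1)^{|\mu|+|\nu|}$ and $q^{(\kappa_\mu+\kappa_\nu)/2}$ implicit in \eqref{eqn:Wmunu}, and the effect of the transpose $\mu\mapsto\mu^t$, which swaps arms and legs and negates $\kappa_\mu$ on the first component. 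Verifying that all of these combine to give exactly the stated diagonal and off-diagonal coefficients---and in particular that the off-diagonal framing $\tfrac{(2a_1+1)m(m+1)-(2a_2+1)n(n+1)}{4}$ emerges correctly---is where essentially all the computational content of the theorem lies; the fermionic determinant of the second paragraph is, by contrast, purely formal.
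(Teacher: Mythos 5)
Your architecture coincides with the paper's: reduce to matching the coefficient of each $|\mu\rangle\otimes|\nu\rangle$, expand the Bogoliubov exponential by Wick's theorem into determinants of the $A^{ij}_{mn}$, and match against the skew-Schur sum via the Frobenius determinantal formula of Proposition \ref{prop:SkewSchurHook}. Your second paragraph is sound, and the single $(k+r)\times(k+r)$ determinant is a cleaner packaging of what the paper writes as a sum over sub-partitions $\gamma=(A|B)<\mu$, $\gamma'=(A'|B')<\nu$ of products of four determinants (Lemma \ref{lem:fermion-determinant}); the two are related by a generalized Laplace expansion along the block rows and columns, and your formula agrees with \eqref{eqn:A2} in the rank-one case.

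The gap is that the argument stops exactly where the theorem lives: everything after ``To produce the right-hand side I would \dots'' is a description of a computation rather than the computation, as you acknowledge. Two concrete things are missing. First, \eqref{eqn:A12A21} determines only the four-parameter family of products $A^{12}_{mn'}A^{21}_{m'n}$; defining $A^{12}$ and $A^{21}$ from the special cases $m'=n=0$ and $m=n'=0$ (with $A^{12}_{00}=q^{1/6}$, $A^{21}_{00}=-q^{-1/6}$) and then using them for general indices presupposes that the right-hand side of \eqref{eqn:A12A21} factors consistently as a product of a function of $(m,n')$ and a function of $(m',n)$ --- this is a genuine identity to be checked, not a definition. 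Second, the Cauchy--Binet/Laplace matching that turns $\sum_\eta s_{\mu^t/\eta}(q^\rho)s_{\nu/\eta}(q^\rho)$ into your block determinant has to be carried out with the transpose on the first factor, the relation $s_\mu(q^\rho)=q^{\kappa_\mu/2}s_{\mu^t}(q^\rho)$, the $(-1)^r$ of Proposition \ref{prop:SkewSchurHook}, and the internal sums over $l$ in \eqref{eqn:A12} and \eqref{eqn:A21} all interacting; the paper needs Lemma \ref{lm:DetAii}, Lemma \ref{lm:DetA12A21} (where the unrestricted sums over $s_i,t_i$ are reorganized into sums over strictly decreasing sequences, i.e.\ over the Frobenius data of $\eta$, using the vanishing of the determinants for repeated indices), and the whole final subsection to do this. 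Until that bookkeeping is executed, what you have established is the formal fermionic side and the rank-one consistency check, not the theorem.
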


\subsection{The determination of $A^{ij}_{mn}(q; a_1, a_2)$} \label{sec:A}
Note that the charge 0 subspace $(\cF\otimes\cF)^{(0)}$ of $\cF\otimes\cF$ has a decomposition
$$(\cF\otimes\cF)^{(0)} = \bigoplus_{n\in\mathbb{Z}}\cF^{(n)}\otimes\cF^{(-n)}.$$
The Framed ADKMV Conjecture predicts the existence of an operator $T$ of the form
\be
T(q;a_1,a_2) = \exp(\sum_{i , j = 1, 2}\sum_{m , n = 0}^{\infty}
A^{ij}_{mn}(q;a_1,a_2) \psi^{ij}_{mn}),
\ee
such that $V^{(a_1,a_2)}(q)$ is the projection of
the element $T(q;a_1,a_2) (|0\rangle\otimes |0\rangle)\in (\cF\otimes\cF)^{(0)}$ onto $\cF^{(0)}\otimes \cF^{(0)}$.
In this subsection we modify the method in \cite[\S 5.11]{ADKMV}
to the framed case to derive explicit expressions for $A^{ij}_{mn}(q; a_1, a_2)$.

Because the operators $\{\psi^{ij}_{mn}\}$ commute with each other and square to zero,
we have
\be
\begin{split}
T(q;a_1,a_2) = \prod_{m,n \geq 0} & [(1 + A^{11}_{mn}(q,a_1,a_2)\psi_{mn}^{11} )(1 + A^{22}_{mn}(q,a_1,a_2)\psi_{mn}^{22} )\\
& \cdot (1 + A^{12}_{mn}(q,a_1,a_2)\psi_{mn}^{12} )(1 + A^{21}_{mn}(q,a_1,a_2)\psi_{mn}^{21} )].
\end{split}
\ee

Take $\mu = (m|n)$ and $\nu = \emptyset$ or take $\nu = (m|n)$ and $\mu = \emptyset$,
as in the one-legged case we get for $i=1,2$:
\begin{equation} \label{eqn:Aii}
\begin{split}
A^{ii}_{mn}(q;a_1,a_2) &= (-1)^n q^{a_i\kappa_{(m|n)}/2}s_{(m|n)}(q^\rho) \\
       &= (-1)^n q^{(2a_i+1)(m(m+1)-n(n+1))/4}\frac{1}{[m+n+1][m]![n]!}.
\end{split}
\end{equation}

Take $\mu = (m|n)$ and $\nu = (m'|n')$,
then it is clear that the coefficient of $|(m|n)\rangle\otimes |(m'|n')\rangle$ in
$T(q;a_1,a_2) (|0\rangle\otimes |0\rangle)$ is
$$(-1)^{n+n'}(A^{11}_{mn}(q;a_1,a_2) A^{22}_{m'n'}(q;a_1,a_2)
-  A^{12}_{mn'}(q,a_1,a_2) A^{21}_{m'n}(q,a_1,a_2)).$$
Assuming the Framed ADKMV Conjecture,
one should have:
\be \label{eqn:A2}
\begin{split}
& q^{(a_1+1)\kappa_{(m|n)}/2+a_2\kappa_{(m'|n')}/2}\sum_{\eta}s_{(n|m)/\eta}(q^\rho)s_{(m'|n')/\eta}(q^\rho)\\
= &(-1)^{n+n'}(A^{11}_{mn}(q,a_1,a_2) A^{22}_{m'n'}(q,a_1,a_2)
-  A^{12}_{mn'}(q,a_1,a_2) A^{21}_{m'n}(q,a_1,a_2) ).
\end{split}
\ee
The left-hand side  can be rewritten as follows:
\ben
&&q^{\frac{(a_1+1)\kappa_{(m|n)}+a_2\kappa_{(m'|n')}}{2}}\sum_{\eta}s_{(n|m)/\eta}(q^\rho)s_{(m'|n')/\eta}(q^\rho)\\
&=& q^{\frac{(a_1+1)\kappa_{(m|n)}+a_2\kappa_{(m'|n')}}{2}}(s_{(n|m)}(q^\rho)s_{(m'|n')}(q^\rho) + \sum_{\eta\neq\emptyset}s_{(n|m)/\eta}(q^\rho)s_{(m'|n')/\eta}(q^\rho)).
\een
Therefore,
by \eqref{eqn:Aii} we have
\be \label{eqn:A12A21}
\begin{split}
& A_{mn'}^{12}(q;a_1,a_2) \cdot A_{m'n}^{21}(q;a_1,a_2) \\
= & (-1)^{n+n'+1}q^{\frac{(a_1+1)\kappa_{(m|n)}+a_2\kappa_{(m'|n')}}{2}} \sum_{\eta\neq\emptyset}s_{(n|m)/\eta}(q^\rho)s_{(m'|n')/\eta}(q^\rho).
\end{split}
\ee
By Proposition \ref{prop:SkewSchurHook}, we have
\begin{equation}
\begin{split}
s_{(n|m)/(s|t)}(q^\rho)& = s_{(m-s)}(q^\rho)s_{(1^{n-t})}(q^\rho)\\
                         & = q^{(m-s)(m-s-1)/4-(n-t)(n-t-1)/4} \frac{1}{[m-s]![n-t]!}.
\end{split}
\end{equation}
We will take $A_{00}^{12}(q;a_1,a_2)= q^{1/6}$, $A_{00}^{21}(q;a_1,a_2) = -q^{-1/6}$
as in \cite{ADKMV}.
If we set $m' = n =0$ and $m = n' = 0$ in \eqref{eqn:A12A21}
respectively,
we get:
\bea
&& A_{mn}^{12}(q;a_1,a_2) = (-1)^n q^{\frac{(2a_1+1)m(m+1)-(2a_2+1)n(n+1)}{4}+\frac{1}{6}}\sum_{s=0}^{\min(m , n)}
 \frac{q^{\frac{1}{2}(s+1)(m+n-s)}}{[m-s]![n-s]!}, \label{eqn:A12} \\
&& A_{mn}^{21}(q;a_1,a_2) = -(-1)^n q^{\frac{(2a_2+1)m(m+1)-(2a_1+1)n(n+1)}{4}-\frac{1}{6}}\sum_{s=0}^{\min(m , n)}
 \frac{q^{-\frac{1}{2}(s+1)(m+n-s)}}{[m-s]![n-s]!} \label{eqn:A21}
\eea

\subsection{Some technical lemmas}
For simplicity of notations,
we will write $A^{ij}_{mn} : = A^{ij}_{mn}(q;a_1, a_2)$.
For a partition $\mu = (m_1, m_2, \cdots, m_k | n_1, n_2, \cdots, n_k)$ in Frobenius notation and $i, j = 1 ,2$, we
define $A^{ij}_{\mu}$ to be the matrix $(A^{ij}_{m_an_b})_{k\times k}$.

For a set $N=\{n_1, \dots, n_k\}$ of numbers, let $||N||$ be the sum of the numbers in $N$.
I.e.,
\be
||N||= \sum_{i=1}^k n_i.
\ee
For simplicity of notations we will write
$f(q^\rho)$ as $\bar{f}$ for $f \in \Lambda$,
e.g.,
$s_{(m|n)/\eta}(q^\rho)$ will be written as $\bar{s}_{(m|n)/\eta}$.

\begin{lem} \label{lm:DetAii}
Suppose that $(M|N) = (m_1, \dots, m_k|n_1, \dots, n_k)$ is a partition in Frobenius notation.
Then we have for $l=1,2$,
\be
\det A^{ll}_{(M|N)} = (-1)^{||N||} q^{a_l \kappa_{(M|N)}/2}
\cdot\det (\bar{s}_{(M|N)}),
\ee
where $(\bar{s}_{(M|N)}) = (\bar{s}_{(m_i|n_j)})_{1 \leq i, j \leq k}$.
\end{lem}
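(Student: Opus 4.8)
The plan is to exploit the multilinearity of the determinant together with the observation that each entry of $A^{ll}_{(M|N)}$ factors into a row-dependent piece, a column-dependent piece, and the value $\bar{s}_{(m_i|n_j)}$ that we wish to isolate. First I would recall the explicit formula \eqref{eqn:Aii}, namely $A^{ll}_{mn} = (-1)^n q^{a_l\kappa_{(m|n)}/2}\bar{s}_{(m|n)}$, and apply Lemma \ref{lm:kappa} to the hook $(m|n)$ to write $\kappa_{(m|n)} = m(m+1) - n(n+1)$. This makes the $(i,j)$ entry of $A^{ll}_{(M|N)}$ equal to $(-1)^{n_j} q^{a_l m_i(m_i+1)/2} q^{-a_l n_j(n_j+1)/2}\bar{s}_{(m_i|n_j)}$, in which the three prefactors separate cleanly by row and by column.

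Next I would extract these prefactors from the determinant one type at a time. The sign $(-1)^{n_j}$ and the power $q^{-a_l n_j(n_j+1)/2}$ depend only on the column index $j$, so by multilinearity in the columns they pull out as $\prod_{j}(-1)^{n_j} q^{-a_l n_j(n_j+1)/2} = (-1)^{||N||} q^{-a_l\sum_j n_j(n_j+1)/2}$. The remaining power $q^{a_l m_i(m_i+1)/2}$ depends only on the row index $i$, so extracting it from each row contributes $q^{a_l\sum_i m_i(m_i+1)/2}$. What is left inside the determinant is exactly $\det(\bar{s}_{(m_i|n_j)}) = \det(\bar{s}_{(M|N)})$.

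Finally I would reassemble the collected powers of $q$. Their product is $q^{a_l(\sum_i m_i(m_i+1) - \sum_j n_j(n_j+1))/2}$, and a second application of Lemma \ref{lm:kappa}, now to the full partition $(M|N)$, identifies the exponent as $a_l\kappa_{(M|N)}/2$. Assembling these pieces gives precisely $\det A^{ll}_{(M|N)} = (-1)^{||N||} q^{a_l\kappa_{(M|N)}/2}\det(\bar{s}_{(M|N)})$, as asserted.

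There is no real obstacle in this argument; it is a bookkeeping exercise in multilinearity, and the only point demanding care is keeping straight which prefactor is indexed by the row and which by the column. In the unprocessed entry both the sign $(-1)^{n_j}$ and the statistic $\kappa_{(m_i|n_j)}$ appear to entangle the $m$'s and the $n$'s, so the key small step is to use Lemma \ref{lm:kappa} to split $\kappa_{(m_i|n_j)}$ into a pure-$m_i$ part and a pure-$n_j$ part. This separation is what lets the row and column factors come out independently, and it is also what causes the final exponent to collapse neatly to $\kappa_{(M|N)}/2$ rather than to something more complicated.
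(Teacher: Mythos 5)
Your argument is correct and is essentially the paper's own proof: the paper likewise uses Lemma \ref{lm:kappa} to split $\kappa_{(m|n)}$ into the pure-$m$ part $\kappa_{(m|0)}=m(m+1)$ and the pure-$n$ part $\kappa_{(0|n)}=-n(n+1)$, pulls the row and column prefactors out of the determinant by multilinearity, and reassembles the exponent into $a_l\kappa_{(M|N)}/2$. Nothing further is needed.
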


\begin{proof}
One can use Lemma \ref{lm:kappa} to get:
\be
\kappa_{(m|n)} = \kappa_{(m|0)} + \kappa_{(0|n)}.
\ee
By \eqref{eqn:Aii},
\ben
\det A^{ll}_{(M|N)}
& = & \det ( (-1)^{n_j} q^{a_l\kappa_{(m_i|0)}/2} \cdot q^{a_l \kappa_{(0|n_j)}/2} \cdot \bar{s}_{(m_i|n_j)} )_{1 \leq i, j \leq k} \\
& = &  (-1)^{\sum_{j=1}^k n_j} q^{a_l \sum_{i=1}^k \kappa_{(m_i|0)}/2}
q^{\sum_{j=1}^k a_l\kappa_{(0|n_j)}/2} \det ( \bar{s}_{(m_i|n_j)} )_{1 \leq i, j \leq k} \\
& = & (-1)^{||N||} q^{a_l \kappa_{(M|N)}/2}\cdot\det (\bar{s}_{(M|N)}).
\een
\end{proof}

\begin{lem} \label{lm:DetA12A21}
Given $r \geq 1$,
suppose $A=\{a_1 > \dots > a_r\}$,  $A'=\{a'_1 > \dots > a'_r\}$,
$B=\{b_1 > \dots > b_r\}$, $B'=\{b'_1 > \dots > b'_r\}$.
one has
\begin{multline}
\det(A^{12}_{(A|B')})\det(A^{21}_{(A'|B)})
= (-1)^{||B||+||B'||+ r} q^{(a_1+1)\kappa_{(A|B)}/2 + a_2 \kappa_{(A'|B')}/2}\\
\cdot \sum_{\substack{s_1> \cdots  >s_r\\t_1> \cdots  > t_r}} \det(\bar{e}_{a_j -t_i} )
\cdot \det (\bar{h}_{b_j - s_i} ) \cdot \det(\bar{h}_{a'_j-s_i} ) \cdot \det ( \bar{e}_{b'_j-t_i} ).
\end{multline}
\end{lem}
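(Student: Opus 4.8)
The plan is to reduce everything to the Cauchy--Binet formula after putting the coefficients \eqref{eqn:A12} and \eqref{eqn:A21} into a factorized shape in which the two indices decouple. First I would rewrite each internal summand as a product of two specialized symmetric functions. Using Proposition \ref{prop:SchurSpec} one has $\bar{e}_k = q^{-k(k-1)/4}/[k]!$ and $\bar{h}_k = q^{k(k-1)/4}/[k]!$. The essential point is that the summation index disappears from the exponent: a short computation (the same hook/content bookkeeping as in \eqref{eqn:Kappa}) shows that for every $s\geq 0$
\be
\frac{q^{(s+1)(m+n-s)/2}}{[m-s]![n-s]!} = q^{(m(m+1)+n(n+1))/4}\,\bar{e}_{m-s}\bar{e}_{n-s},
\ee
and likewise with the exponent negated on both sides, giving $q^{-(m(m+1)+n(n+1))/4}\bar{h}_{m-s}\bar{h}_{n-s}$. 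Substituting this into \eqref{eqn:A12} and \eqref{eqn:A21} and absorbing the now index-free power of $q$, I obtain
\be
A^{12}_{mn} = (-1)^n q^{1/6}\, q^{((a_1+1)m(m+1)-a_2 n(n+1))/2}\sum_{t\geq 0}\bar{e}_{m-t}\bar{e}_{n-t},
\ee
\be
A^{21}_{mn} = -(-1)^n q^{-1/6}\, q^{(a_2 m(m+1)-(a_1+1)n(n+1))/2}\sum_{s\geq 0}\bar{h}_{m-s}\bar{h}_{n-s},
\ee
the sums being finite since $\bar{e}_k=\bar{h}_k=0$ for $k<0$.

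Next I would expand the two determinants. In the entry $A^{12}_{a_i b'_j}$ the factor $q^{(a_1+1)a_i(a_i+1)/2}$ depends only on the row and $(-1)^{b'_j}q^{1/6}q^{-a_2 b'_j(b'_j+1)/2}$ only on the column, so both pull out of $\det(A^{12}_{(A|B')})$ as products over $i$ and over $j$. The surviving matrix $\big(\sum_t \bar{e}_{a_i-t}\bar{e}_{b'_j-t}\big)$ is the product of an $r\times\infty$ matrix with an $\infty\times r$ one, and the Cauchy--Binet formula turns its determinant into $\sum_{t_1>\cdots>t_r}\det(\bar{e}_{a_j-t_i})\det(\bar{e}_{b'_j-t_i})$. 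Handling $\det(A^{21}_{(A'|B)})$ the same way yields $\sum_{s_1>\cdots>s_r}\det(\bar{h}_{a'_j-s_i})\det(\bar{h}_{b_j-s_i})$ together with its own row/column prefactors. Passing between the increasing order natural to Cauchy--Binet and the decreasing order used in the statement costs $(-1)^{r(r-1)/2}$ in each of the two determinants, which cancels in the product.

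Finally, multiplying the two expressions immediately produces the claimed double sum over $s_1>\cdots>s_r$ and $t_1>\cdots>t_r$, and it remains only to reassemble the scalar prefactor. The two contributions $q^{r/6}$ and $q^{-r/6}$ cancel; the signs $(-1)^{||B'||}$, $(-1)^r$ and $(-1)^{||B||}$ combine to $(-1)^{||B||+||B'||+r}$; and the four row/column $q$-powers regroup, via Lemma \ref{lm:kappa} applied to the partitions $(A|B)$ and $(A'|B')$, into $q^{(a_1+1)\kappa_{(A|B)}/2+a_2\kappa_{(A'|B')}/2}$, exactly the right-hand side. The main obstacle is the exponent accounting throughout: verifying the index-cancellation identity above, and then checking that the prefactors of the two determinants, which are computed entirely separately, recombine into precisely $\kappa_{(A|B)}$ and $\kappa_{(A'|B')}$ rather than some other quadratic expression in the parts.
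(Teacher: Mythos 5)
Your proof is correct, and it takes a genuinely different route from the paper's. The paper never decouples the two determinants: it expands $\det(A^{12}_{(A|B')})\det(A^{21}_{(A'|B)})$ over a pair of permutations $\sigma,\tau\in S_r$, applies the \emph{product} relation \eqref{eqn:A12A21} to each pair $A^{12}_{a_ib'_{\sigma(i)}}A^{21}_{a'_ib_{\tau(i)}}$, converts the hook skew Schur functions to $\bar h\cdot\bar e$ via Proposition \ref{prop:SkewSchurHook}, and then reassembles the four determinants by hand --- observing that terms with repeated $s_i$ or $t_i$ kill a determinant, restricting to distinct indices, and re-summing over four permutations. You instead factorize each coefficient individually as (row factor)$\times$(column factor)$\times\sum_t\bar e_{m-t}\bar e_{n-t}$ (resp. $\sum_s\bar h_{m-s}\bar h_{n-s}$) --- your index-cancellation identity checks out, since $m(m+1)+n(n+1)-(m-s)(m-s-1)-(n-s)(n-s-1)=2(s+1)(m+n-s)$ --- and then apply Cauchy--Binet to each determinant separately, multiplying only at the end. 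What this buys: Cauchy--Binet automatically packages the ``repeated indices vanish'' and permutation-resummation steps that the paper does by hand, so the combinatorial bookkeeping is lighter; and because you work directly from the closed forms \eqref{eqn:A12}--\eqref{eqn:A21} rather than from \eqref{eqn:A12A21}, your computation has the side benefit of verifying that the quadratic relation \eqref{eqn:A12A21} actually holds for \emph{all} quadruples $(m,n,m',n')$ given those closed forms --- a consistency point the paper's derivation (which obtains \eqref{eqn:A12}--\eqref{eqn:A21} only by specializing \eqref{eqn:A12A21}) leaves implicit. The prefactor reassembly via Lemma \ref{lm:kappa}, the cancellation of $q^{\pm r/6}$, and the $(-1)^{r(r-1)/2}$ reversal signs cancelling in pairs are all as you say.
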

\begin{proof}
Expanding the determinants one has

\ben
&&  \det(A^{12}_{(A|B')})\det(A^{21}_{(A'|B)})
=    \sum_{\sigma,\tau \in S_r}\epsilon(\sigma\tau)A^{12}_{a_ib'_{\sigma(i)}}A^{21}_{a'_i b_{\tau(i)}}\\
& =& \sum_{\sigma,\tau \in S_r}\epsilon(\sigma\tau) \prod_{i=1}^r
\biggl( (-1)^{b_{\tau(i)}+b'_{\sigma(i)} + 1 }
q^{\frac{(a_1+1)\kappa_{(a_i|b_{\tau(i)})}+a_2\kappa_{(a_i'|b'_{\sigma(i)})} }{2}} \\
&& \cdot \sum_{\eta_i \neq \emptyset}
   \bar{s}_{(b_{\tau(i)}|a_i)/\eta_i} \bar{s}_{(a'_i|b'_{\sigma(i)})/\eta_i} \biggr),
\een
where $\epsilon(\sigma\tau)$ is the sign of the permutation $\sigma\tau$, and in the second equality we have used \eqref{eqn:A12A21}.

By Proposition \ref{prop:SkewSchurHook},

\ben
&&  \det(A^{12}_{(A|B')})\det(A^{21}_{(A'|B)}) \\
&=&\sum_{\sigma,\tau \in S_r}\epsilon(\sigma\tau) \prod_{i=1}^r\biggl( (-1)^{b_{\tau(i)}+b'_{\sigma(i)} +1 }
q^{\frac{(a_1+1)\kappa_{(a_i|b_{\tau(i)})}+a_2\kappa_{(a_i'|b'_{\sigma(i)})} }{2}} \\
&& \cdot \sum_{(s_i|t_i)} \bar{h}_{b_{\tau(i)}-s_i} \bar{e}_{a_i-t_i}
\bar{h}_{a'_i-s_i} \bar{e}_{b'_{\sigma(i)}-t_i} \biggr) \\
&=&\sum_{s_i , t_i \geq 0, i=1, \dots, r}\sum_{\sigma,\tau \in S_r}\epsilon(\sigma\tau)
\prod_{i=1}^r \biggl( (-1)^{b_{\tau(i)}+b'_{\sigma(i)} + 1 }
q^{\frac{(a_1+1)\kappa_{(a_i|0)} + (a_1+1)\kappa_{(0|b_{\tau(i)})} }{2}} \\
&& \cdot q^{\frac{a_2\kappa_{(a_i'|0)}+a_2\kappa_{(0|b'_{\sigma(i)})} }{2}}
\bar{h}_{b_{\tau(i)}-s_i} \bar{e}_{a_i-t_i} \bar{h}_{a'_i-s_i} \bar{e}_{b'_{\sigma(i)}-t_i} \biggr)\\
& = & (-1)^r \sum_{s_i , t_i \geq 0, i=1, \dots, r}
\prod_{i=1}^r  ((-1)^{b_i+b_i'}q^{\frac{(a_1+1)\kappa_{(a_i|b_i)} + a_2 \kappa_{(a'_i|b_i')} }{2}}
\bar{e}_{a_i-t_i} \bar{h}_{a'_i-s_i}) \\
&& \cdot \det (   \bar{h}_{b_j-s_i})_{1 \leq i,j \leq r} \cdot
\det (  \bar{e}_{b'_j-t_i} )_{1 \leq i, j \leq r} \\
& = & (-1)^{||B||+||B'||+ r} q^{(a_1+1)\kappa_{(A|B)}/2 + a_2 \kappa_{(A'|B')}/2}
\sum_{s_i , t_i \geq 0, i=1, \dots, r}
\prod_{i=1}^r  ( \bar{e}_{a_i-t_i} \bar{h}_{a'_i-s_i}) \\
&& \cdot \det (   \bar{h}_{b_j-s_i})_{1 \leq i,j \leq r} \cdot
\det (  \bar{e}_{b'_j-t_i} )_{1 \leq i, j \leq r} .
\een
Now note $\det (h_{b_{j}-s_i})_{1 \leq i,j \leq r} = 0$
if $s_i = s_j$ for some $1 \leq i < j \leq r$,
and $\det ( e_{b'_{j}-t_i} )_{1 \leq i, j \leq r} = 0$
if $t_i = t_j$ for some $1 \leq i < j \leq r$.
Therefore,

\ben
&& \sum_{s_i , t_i \geq 0, i=1, \dots, r}
\prod_{i=1}^r (\bar{e}_{a_i-t_i}\bar{h}_{a'_i-s_i})\cdot \det (\bar{h}_{b_j-s_i})_{1 \leq i,j \leq r} \cdot
\det ( \bar{e}_{b'_j-t_i} )_{1 \leq i, j \leq r} \\
&=&\sum_{\substack{s_1, \cdots  , s_r\ are\ distinct\\t_1,\cdots , t_r\ are\ distinct}}
\prod_{i=1}^r (\bar{e}_{a_i-t_i}\bar{h}_{a'_i-s_i})\cdot \det (\bar{h}_{b_j-s_i})_{1 \leq i,j \leq r} \cdot
\det ( \bar{e}_{b'_j-t_i} )_{1 \leq i, j \leq r} \\
&=&\sum_{\substack{s_1, \cdots  , s_r\ are\ distinct\\t_1,\cdots , t_r\ are\ distinct}}
\sum_{\sigma,\tau \in S_r}\epsilon(\sigma\tau) \prod_{i=1}^r
 \bar{h}_{b_{\tau(i)}-s_i}\bar{e}_{a_i-t_i}\bar{h}_{a'_i-s_i}\bar{e}_{b'_{\sigma(i)}-t_i}
\een
\ben
&=& \sum_{\substack{s_1> \cdots  >s_r\\t_1> \cdots  > t_r}}\sum_{x,y,x',y' \in S_r} \epsilon(xyx'y')
   \prod_{i=1}^r \bar{e}_{a_{x(i)} - t_{i}}\bar{h}_{b_{y(i)}-s_i}\bar{h}_{a'_{x'(i)} - s_{i}}\bar{e}_{b'_{y'(i)}-t_i} \\
&=&\sum_{\substack{s_1> \cdots  >s_r\\t_1> \cdots  > t_r}} \det(\bar{e}_{a_j -t_i} )
\cdot \det (\bar{h}_{b_j - s_i} ) \cdot \det(\bar{h}_{a'_j-s_i} ) \cdot \det ( \bar{e}_{b'_j-t_i} ).
\een
The proof is complete.
\end{proof}

\subsection{From fermionic representation to determinantal representation}
 For a partition $\mu = (m_1, m_2, \cdots, m_k | n_1, n_2, \cdots, n_k)$ in Frobenius notation and $i, j = 1 ,2$, we
define an operator
\begin{equation}
\psi^{ij}_{\mu} = \prod_{a=1}^k \psi^{ij}_{m_an_a}.
\end{equation}
By \eqref{eqn:CR}, we can expand $T$ as follows:
\ben
&& \prod_{m,n} (1 + A^{11}_{mn}\psi_{mn}^{11} )(1 + A^{22}_{mn}\psi_{mn}^{22} )(1 + A^{12}_{mn}\psi_{mn}^{12} )(1 + A^{21}_{mn}\psi_{mn}^{21} )\\
  & = &(1\ +\sum_{\substack {m_1>\cdots>m_k\\ n_1>\cdots>n_k}} \det(A^{11}_{m_in_j}) \prod_{i=1}^k \psi^{11}_{m_in_i} )
    \cdot (1\ +\sum_{\substack {m_1>\cdots>m_k\\ n_1>\cdots>n_k}} \det(A^{22}_{m_in_j}) \prod_{i=1}^k \psi^{22}_{m_in_i} )\\
    && \cdot (1\ +\sum_{\substack {m_1>\cdots>m_k\\ n_1>\cdots>n_k}}\det(A^{12}_{m_in_j}) \prod_{i=1}^k \psi^{12}_{m_in_i} )
     \cdot (1\ +\sum_{\substack {m_1<\cdots<m_k\\ n_1<\cdots<n_k}}\det(A^{21}_{m_in_j}) \prod_{i=1}^k \psi^{21}_{m_in_i} )\\
  &=&\sum_{\mu^1}\det A_{\mu^1}^{11}\psi^{11}_{\mu^1} \cdot \sum_{\mu^2}\det A_{\mu^2}^{22}\psi^{22}_{\mu^1}
    \cdot \sum_{\mu^3}\det A_{\mu^3}^{12}\psi^{12}_{\mu^3} \cdot \sum_{\mu^4}\det A_{\mu^4}^{21}\psi^{21}_{\mu^4}\\
  &=&\sum_{\mu^1 , \mu^2,\mu^3 ,\mu^4}\det A_{\mu^1}^{11}\det A_{\mu^2}^{22}\det A_{\mu^3}^{12}\det A_{\mu^4}^{21}
   \cdot \psi^{11}_{\mu^1}\psi^{22}_{\mu^2}\psi^{12}_{\mu^3}\psi^{21}_{\mu^4},
\een
where the summation is over all partitions  $\mu^1 , \mu^2 ,\mu^3 ,\mu^4$, including the empty partition, and we set $det A_{\mu}^{ij}= 1$,   and
$\psi^{ij}_{\mu} = 1$ is $\mu$ is the empty partition.
Now let $\mu = (M|N)=(m_1, m_2, \cdots, m_k | n_1, n_2, \cdots, n_k)$ and $\nu = (M'|N')=(m_1', m_2', \cdots, m_l' | n'_1, n'_2,\cdots, n'_l)$
be two partitions.
Denote by $C^{(a_1,a_2)}_{\mu\nu}(q)$    the inner product of $|\mu\rangle\otimes |\nu\rangle$
with $T(q;a_1,a_2)(|0\rangle\otimes |0\rangle)$.

We need some notations.
For a partition $\mu = (m_1, m_2, \cdots, m_k | n_1, n_2, \cdots, n_k)$,
define $r(\mu) = k$ to be the length of the diagonal of its Young diagram.

Let $M$ be a set of nonnegative integers $\{m_1> m_2 > \cdots > m_k\}$ written in decreasing order.
For a subset $A = \{m_{i_1}> \cdots>m_{i_r}\}$ of $M$,
also written in decreasing order,
denote by $\varepsilon(M/A)$ the sign of the permutation
$$(m_{i_1}, \cdots ,m_{i_r}, m_{j_1}, \cdots , m_{j_{k-r}})\rightarrow (m_1, \cdots , m_k),$$
where $(m_{j_1}, \cdots m_{j_{k-r}})$ is the set $M\backslash A$ written in decreasing order.
Let $\mu = (M|N)$ and $\gamma = (A|B)$ be two partitions,
we define $\gamma < \mu$ if $A\subset M$ and $B\subset N$.
If $\gamma < \mu$ holds, then $\mu\backslash \gamma := (M\backslash A | N\backslash B)$ is naturally defined as a partition.
By \eqref{eqn:CR}, the following Lemma is easy to prove.

\begin{lem} \label{lm:Fermion}
Let $\mu = (M|N)$ and $\gamma = (A|B)$ be two partitions such that $\gamma < \mu$, then
\begin{equation}
\psi_{\gamma}\psi_{\mu\backslash \gamma} = \varepsilon(M/A)\varepsilon(N/B)\psi_{\mu}\ \ .
\end{equation}
\end{lem}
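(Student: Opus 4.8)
The plan is to reduce the claim to the elementary anticommutation relations \eqref{eqn:CR} together with a careful sign count; the superscript fixing the pair of legs plays no role, and I suppress it as in the statement. First I would unfold the composite operators: each factor $\psi_{mn} = \psi_{m+\half}\psi^*_{-n-\half}$ is the product of a creator $\psi_{m+\half}$ (positive half-integer index) and a creator $\psi^*_{-n-\half}$ (negative half-integer index). The $\psi$-indices appearing in $\psi_\mu$, $\psi_\gamma$, $\psi_{\mu\backslash\gamma}$ are all positive and distinct (as $M$ consists of distinct integers), the $\psi^*$-indices are all negative and distinct (as $N$ does), and a $\psi$-index never equals a $\psi^*$-index; hence by \eqref{eqn:CR} every pair of fermions occurring anticommutes. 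Consequently each of the three products equals $\pm$ a common normal form, and the whole lemma becomes a comparison of signs.

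Second, I would move all $\psi$-creators to the left of all $\psi^*$-creators in each product. In $\psi_\mu = \prod_{a=1}^k \psi_{m_a+\half}\psi^*_{-n_a-\half}$ the $a$-th star must cross the $k-a$ later creators, producing the sign $(-1)^{\binom{k}{2}}$ and leaving a $\psi$-block ordered by $M$ decreasing followed by a $\psi^*$-block ordered by $N$ decreasing. For $\psi_\gamma\psi_{\mu\backslash\gamma}$ the same separation inside each factor yields $(-1)^{\binom{r}{2}+\binom{k-r}{2}}$, and dragging the $r$ stars of $\psi_\gamma$ past the $k-r$ creators of $\psi_{\mu\backslash\gamma}$ contributes a further $(-1)^{r(k-r)}$. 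The key point is the identity $\binom{r}{2}+\binom{k-r}{2}+r(k-r)=\binom{k}{2}$, so the separation signs on the two sides coincide and cancel.

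Finally, after separation the two sides differ only in the internal order of their two blocks. On the $\psi$-side, $\psi_\gamma\psi_{\mu\backslash\gamma}$ presents the indices of $A$ (decreasing) followed by those of $M\backslash A$ (decreasing), whereas $\psi_\mu$ presents $M$ decreasing; since the $\psi$'s anticommute, the reordering contributes precisely $\varepsilon(M/A)$ by its very definition, and identically the $\psi^*$-blocks contribute $\varepsilon(N/B)$. Multiplying the two gives $\psi_\gamma\psi_{\mu\backslash\gamma}=\varepsilon(M/A)\varepsilon(N/B)\,\psi_\mu$. The only real obstacle is the sign bookkeeping, and it is entirely defused by the binomial identity above, after which everything collapses to the two independent block reorderings defining $\varepsilon(M/A)$ and $\varepsilon(N/B)$; a quick check on $M=\{3,1\}$, $N=\{2,0\}$, $A=\{3\}$, $B=\{0\}$, where the reordering sign is $-1=\varepsilon(M/A)\varepsilon(N/B)=(+1)(-1)$, confirms the count.
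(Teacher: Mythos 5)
Your proof is correct and follows exactly the route the paper intends: the paper offers no written proof, merely asserting that the lemma "is easy to prove" from the anticommutation relations \eqref{eqn:CR}, and your argument is a complete and accurate elaboration of that claim. The normal-ordering sign count, the identity $\binom{r}{2}+\binom{k-r}{2}+r(k-r)=\binom{k}{2}$, and the identification of the residual block-reordering signs with $\varepsilon(M/A)$ and $\varepsilon(N/B)$ are all verified correctly.
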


Now it is straightforward to get the following

\begin{lem}\label{lem:fermion-determinant}
Let $C^{(a_1,a_2)}_{\mu\nu}(q) = \langle \mu, \nu|T(q;a_1,a_2) \vac \otimes \vac$.
Then one has
\begin{equation*}
\begin{split}
 C^{(a_1,a_2)}_{\mu\nu}(q) =&(-1)^{||N||+||N'||} \sum_{\substack{\gamma=(A|B)<\mu \\ \gamma'=(A'|B')<\nu \\
            r(\gamma)=r(\gamma')}}(-1)^{r(\gamma)}\varepsilon(M/A)\varepsilon(N/B)
            \varepsilon(M'/A')\varepsilon(N'/B')\\
& \cdot            \det(A^{11}_{(M\backslash A|N\backslash B)}) \det(A^{22}_{(M'\backslash A'|N'\backslash B')})
            \det(A^{12}_{(A|B')}) \det(A^{21}_{(A'|B)}).
\end{split}
\end{equation*}
\end{lem}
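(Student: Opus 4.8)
The plan is to evaluate $C^{(a_1,a_2)}_{\mu\nu}(q) = \langle \mu, \nu|T(q;a_1,a_2) \vac \otimes \vac$ directly from the expansion of $T$ obtained just above the lemma, by determining which of the four summation partitions $\mu^1,\mu^2,\mu^3,\mu^4$ can contribute and then collecting the fermionic signs. Writing $\mu^3 = (A|B')$ and $\mu^4 = (A'|B)$ in Frobenius notation, the operator $\psi^{12}_{\mu^3}=\prod_i\psi^1_{a_i+\half}\psi^{2*}_{-b'_i-\half}$ contributes the particle creators $\psi^1_{a_i+\half}$ (indexed by $A$) on the first factor and the hole creators $\psi^{2*}_{-b'_i-\half}$ (indexed by $B'$) on the second, while $\psi^{21}_{\mu^4}$ contributes hole creators $\psi^{1*}_{-b_i-\half}$ (indexed by $B$) on the first factor and particle creators $\psi^2_{a'_i+\half}$ (indexed by $A'$) on the second. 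Since $\psi^{11}_{\mu^1}$ and $\psi^{22}_{\mu^2}$ act purely on the first and second tensor factors, the whole state factorizes, up to the Koszul signs fixed in \S\ref{sec:ADKMV}, into a first-factor piece built from $\vac$ and a second-factor piece built from $\vac$.

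First I would impose charge counting on each factor. The mixing operators $\psi^{12}$ and $\psi^{21}$ shift the charge of the first factor by $+r(\mu^3)$ and $-r(\mu^4)$ respectively; since $|\mu\rangle\otimes|\nu\rangle$ lies in $\cF^{(0)}\otimes\cF^{(0)}$, a term survives only when $r(\mu^3)=r(\mu^4)$, which together with the Frobenius constraint forces $|A|=|B|=|A'|=|B'|$, i.e. $r(\gamma)=r(\gamma')$ for $\gamma=(A|B)$ and $\gamma'=(A'|B')$. Matching the particle and hole data on the first factor then forces the particles created there to be $M = (M\backslash A)\sqcup A$ and the holes to be $N = (N\backslash B)\sqcup B$, so that $\gamma=(A|B)<\mu$ and $\mu^1 = \mu\backslash\gamma = (M\backslash A|N\backslash B)$; the analogous matching on the second factor gives $\gamma'=(A'|B')<\nu$ and $\mu^2 = \nu\backslash\gamma' = (M'\backslash A'|N'\backslash B')$. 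This collapses the quadruple sum over $\mu^1,\dots,\mu^4$ to the sum over $\gamma<\mu$, $\gamma'<\nu$ with $r(\gamma)=r(\gamma')$ appearing in the statement, and fixes the coefficient to be $\det(A^{11}_{(M\backslash A|N\backslash B)})\det(A^{22}_{(M'\backslash A'|N'\backslash B')})\det(A^{12}_{(A|B')})\det(A^{21}_{(A'|B)})$.

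Next I would collect the signs factor by factor. On the first factor the creators from $\psi^{11}_{\mu^1}$ (producing $\mu^1=(M\backslash A|N\backslash B)$) together with the particles $A$ and holes $B$ coming from the mixing terms must be repaired and reordered into the canonical product defining $|\mu\rangle$; by the anticommutation relations \eqref{eqn:CR} this is exactly the content of Lemma \ref{lm:Fermion}, which converts $\psi_\gamma\psi_{\mu\backslash\gamma}$ into $\varepsilon(M/A)\varepsilon(N/B)\psi_\mu$, while the passage from $\prod_i\psi_{m_i+\half}\psi^*_{-n_i-\half}\vac$ to $|\mu\rangle$ contributes the factor $(-1)^{\|N\|}$. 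The same bookkeeping on the second factor yields $\varepsilon(M'/A')\varepsilon(N'/B')$ and $(-1)^{\|N'\|}$, so that $(-1)^{\|N\|+\|N'\|}$ emerges as a global prefactor and the $\varepsilon$-signs as $\gamma,\gamma'$-dependent factors inside the sum. Finally, disentangling the first- and second-factor fermions interleaved inside $\psi^{12}_{\mu^3}\psi^{21}_{\mu^4}$ requires anticommuting fermionic operators on the two factors past one another, and under the Koszul convention these crossings contribute the remaining sign $(-1)^{r(\gamma)}$; assembling all of this with the coefficient above gives the asserted formula.

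I expect the only genuinely delicate point to be this last one: tracking the Koszul crossing signs produced when the interleaved first- and second-factor operators of $\psi^{12}_{\mu^3}\psi^{21}_{\mu^4}$ are separated, together with the additional reordering needed to pair the particles $A$ (coming from $\psi^{12}$) with the holes $B$ (coming from $\psi^{21}$) into the $\psi_\gamma$ form required by Lemma \ref{lm:Fermion}, and then verifying that the net contribution of all these rearrangements is exactly $(-1)^{r(\gamma)}$ and not some other power. Everything else is a routine application of \eqref{eqn:CR} and Lemma \ref{lm:Fermion}, which is why the conclusion can be recorded as straightforward.
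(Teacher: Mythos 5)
Your proposal is correct and follows essentially the same route the paper takes (the paper records this lemma as a straightforward consequence of the expansion of $T$ and Lemma \ref{lm:Fermion}): charge counting on each tensor factor forces $r(\mu^3)=r(\mu^4)$ and collapses the quadruple sum to pairs $\gamma<\mu$, $\gamma'<\nu$ with $r(\gamma)=r(\gamma')$, the $\varepsilon$-signs come from Lemma \ref{lm:Fermion}, the $(-1)^{\|N\|+\|N'\|}$ from the normalization of $|\mu\rangle$, and the Koszul crossings give $(-1)^{r(\gamma)}$. The one step you flag as delicate — that the net crossing sign is exactly $(-1)^{r}$ — does check out (the two interleaving sorts contribute $(-1)^{r(r-1)/2}$ and $(-1)^{r(r+1)/2}$, whose product is $(-1)^{r^2}=(-1)^r$), so nothing is missing.
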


\begin{prop}\label{2-part det}
We have
\begin{equation} \label{eqn:Sum}
\begin{split}
&C^{(a_1,a_2)}_{\mu\nu}(q)\\
 =&\sum_{r=0}^{\min(k,l)} \sum_{*r } \varepsilon(M/A)\varepsilon(N/B)
   \varepsilon(M'/A')\varepsilon(N'/B')\\
  &\cdot q^{(a_1\kappa_{(M|N)} + a_2 \kappa_{(M'|N')} + \kappa_{(A|B)})/2}\cdot   \det(\bar{s}_{(M\backslash A|N \backslash B)}) \cdot \det(\bar{s}_{(M'\backslash A'|N' \backslash B')})\\
 &\cdot \sum_{\substack{s_1> \cdots  >s_r\\t_1> \cdots  > t_r}} \det(\bar{e}_{a_j -t_i} )
\cdot \det (h_{b_j - s_i} ) \cdot \det(h_{a'_j-s_i} ) \cdot \det ( \bar{e}_{b'_j-t_i} ),
\end{split}
\end{equation}
where the condition ($*r$) in the summation is given by
\begin{equation}\label{summation condition}
\gamma=(A|B)<\mu ,\ \gamma'=(A'|B')<\nu, \ r(\gamma)=r(\gamma')=r.
\end{equation}
\end{prop}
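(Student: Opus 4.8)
The plan is to derive Proposition \ref{2-part det} purely by substitution, feeding the closed-form evaluations of the four determinant factors appearing in Lemma \ref{lem:fermion-determinant} into that formula and then reorganising the resulting sum according to the common rank $r=r(\gamma)=r(\gamma')$. Recall that Lemma \ref{lem:fermion-determinant} already writes $C^{(a_1,a_2)}_{\mu\nu}(q)$ as a sum over $\gamma=(A|B)<\mu$ and $\gamma'=(A'|B')<\nu$ of the product of four determinants $\det(A^{11}_{(M\backslash A|N\backslash B)})$, $\det(A^{22}_{(M'\backslash A'|N'\backslash B')})$, $\det(A^{12}_{(A|B')})$ and $\det(A^{21}_{(A'|B)})$, weighted by the four $\varepsilon$ signs and by $(-1)^{||N||+||N'||}(-1)^{r(\gamma)}$. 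For the first two factors I would apply Lemma \ref{lm:DetAii} with $l=1$ and $l=2$ respectively, and for the product of the last two I would apply Lemma \ref{lm:DetA12A21} with the same index $r$. Once this is done, the statement follows after collecting the accumulated signs and powers of $q$.

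The first bookkeeping step is to verify that all spurious signs cancel, leaving only the four $\varepsilon$ factors. Lemma \ref{lm:DetAii} contributes $(-1)^{||N\backslash B||}$ and $(-1)^{||N'\backslash B'||}$, and Lemma \ref{lm:DetA12A21} contributes $(-1)^{||B||+||B'||+r}$; combined with the prefactor $(-1)^{||N||+||N'||+r}$ the total sign is
\[
(-1)^{||N||+||N\backslash B||+||B||}\cdot(-1)^{||N'||+||N'\backslash B'||+||B'||}\cdot(-1)^{2r}.
\]
Since $B\subset N$ forces $||N||=||N\backslash B||+||B||$ and $B'\subset N'$ forces $||N'||=||N'\backslash B'||+||B'||$, each of the first two factors equals $(-1)^{2||N||}=1$ and $(-1)^{2||N'||}=1$, while the third is $(-1)^{2r}=1$. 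Thus the total sign is trivial, as required.

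The second step is to combine the three powers of $q$. Lemma \ref{lm:DetAii} yields $q^{a_1\kappa_{(M\backslash A|N\backslash B)}/2}$ and $q^{a_2\kappa_{(M'\backslash A'|N'\backslash B')}/2}$, while Lemma \ref{lm:DetA12A21} yields $q^{((a_1+1)\kappa_{(A|B)}+a_2\kappa_{(A'|B')})/2}$. Here I would invoke the additivity of $\kappa$ furnished by Lemma \ref{lm:kappa}: in Frobenius notation the row set of $(M|N)$ splits as $A\sqcup(M\backslash A)$ and its column set as $B\sqcup(N\backslash B)$, so the identity $\kappa_{(M|N)}=\sum_{m\in M}m(m+1)-\sum_{n\in N}n(n+1)$ gives $\kappa_{(M\backslash A|N\backslash B)}=\kappa_{(M|N)}-\kappa_{(A|B)}$, and likewise $\kappa_{(M'\backslash A'|N'\backslash B')}=\kappa_{(M'|N')}-\kappa_{(A'|B')}$. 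Substituting these and cancelling the $-a_1\kappa_{(A|B)}$ and $-a_2\kappa_{(A'|B')}$ terms from the diagonal blocks against the $a_1\kappa_{(A|B)}$ and $a_2\kappa_{(A'|B')}$ terms from the off-diagonal block, the exponent collapses to
\[
\frac{1}{2}\bigl(a_1\kappa_{(M|N)}+a_2\kappa_{(M'|N')}+\kappa_{(A|B)}\bigr),
\]
matching the exponent in \eqref{eqn:Sum}.

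With signs and exponents disposed of, the surviving factors are exactly $\det(\bar{s}_{(M\backslash A|N\backslash B)})$, $\det(\bar{s}_{(M'\backslash A'|N'\backslash B')})$ and the inner sum over $s_1>\cdots>s_r$, $t_1>\cdots>t_r$ of the four $\bar{e}$ and $\bar{h}$ determinants carried over verbatim from Lemma \ref{lm:DetA12A21}; grouping the outer sum over $\gamma,\gamma'$ by their common rank $r$ then reproduces \eqref{eqn:Sum} exactly. The computation is essentially mechanical, so I do not expect a genuine conceptual obstacle. The one point demanding care is precisely the sign-and-exponent bookkeeping above, where one must track which of the coordinate sets $A,B,A',B'$ feeds into each determinant and apply the Frobenius additivity of $\kappa$ to the correct splitting of $(M|N)$ and $(M'|N')$; a misplaced index or sign there is the only realistic source of error.
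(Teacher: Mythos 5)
Your proposal is correct and follows essentially the same route as the paper: both start from Lemma \ref{lem:fermion-determinant}, substitute Lemma \ref{lm:DetAii} for the diagonal blocks and Lemma \ref{lm:DetA12A21} for the off-diagonal product, cancel the signs via $||N||=||N\backslash B||+||B||$, and collapse the $q$-exponents using the Frobenius additivity of $\kappa$. The only cosmetic difference is that the paper treats the $r=0$ term separately (since Lemma \ref{lm:DetA12A21} is stated for $r\geq 1$), a trivial edge case your argument absorbs implicitly.
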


\begin{proof}
For $r\geq 0$, let
\begin{equation}  \label{eqn:Cr}
\begin{split}
& C^{(a_1,a_2),r}_{\mu\nu}(q)  =(-1)^{||N||+||N'||}  \sum_{*r}(-1)^{r}\varepsilon(M/A)\varepsilon(N/B)
  \varepsilon(M'/A')\varepsilon(N'/B') \\
 & \qquad \cdot \det(A^{11}_{(M\backslash A|N\backslash B)}) \det(A^{22}_{(M'\backslash A'|N'\backslash B')})
            \det(A^{12}_{(A|B')})\det(A^{21}_{(A'|B)}),
\end{split}
\end{equation}
where the condition ($*r$) in the summation is given by \eqref{summation condition}.
Then we have
\begin{equation}
C^{(a_1,a_2)}_{\mu\nu}(q) = \sum_{r=0}^{\min(k,l)}C^{(a_1,a_2),r}_{\mu\nu}(q)  .
\end{equation}
For $r=0$, by \eqref{eqn:Cr}, Proposition \ref{prop:SchurHook} and \eqref{eqn:Aii}, we have
\ben
C^{(a_1,a_2),0}_{\mu\nu}(q) &= &(-1)^{||N|| + ||N'||} \det(A^{11}_{(M|N)})\det(A^{22}_{(M'|N')})\\
& = & q^{\frac{a_1\kappa_{(M|N)} + a_2\kappa_{(M'|N')}}{2}}\det(\bar{s}_{(m_i|n_j)}(q^\rho))\det(\bar{s}_{(m_i'|n_j')}(q^\rho))\\
& = &  q^{\frac{a_1\kappa_{(M|N)} + a_2\kappa_{(M'|N')}}{2}}\bar{s}_{(M|N)}(q^\rho)\bar{s}_{(M'|N')}(q^\rho).
\een
For $r>0$, we use Lemma \ref{lm:DetAii} , Lemma \ref{lm:DetA12A21} and Lemma \ref{lem:fermion-determinant} to get:
\ben
&&C^{(a_1,a_2),r}_{\mu\nu}(q)\\
&=&\sum_{*r}\varepsilon(M/A)\varepsilon(N/B)\varepsilon(M'/A')\varepsilon(N'/B')\\
&& \cdot (-1)^{||N\backslash B||} \det(A^{11}_{(M\backslash A|N\backslash B)}) \cdot   (-1)^{||N'\backslash B'||}\det(A^{22}_{(M'\backslash A'|N'\backslash B')})\\
&&\cdot (-1)^{||B||+||B'||+ r}\det(A^{12}_{(A|B')}) \det(A^{21}_{(A'|B)})\\
&=&  q^{\frac{a_1\kappa_{(M\backslash A|N \backslash B)}}{2}}
 \det(\bar{s}_{(M\backslash A|N \backslash B)})q^{\frac{a_2\kappa_{(M'\backslash A'|N' \backslash B')}}{2}}
 \det(\bar{s}_{(M'\backslash A'|N' \backslash B')})\\
&& \cdot  q^{(a_1+1)\kappa_{(A|B)}/2 + a_2 \kappa_{(A'|B')}/2}\\
&&\sum_{\substack{s_1> \cdots  >s_r\\t_1> \cdots  > t_r}} \det(\bar{e}_{a_j -t_i} )
\cdot \det (\bar{h}_{b_j - s_i} ) \cdot \det(\bar{h}_{a'_j-s_i} ) \cdot \det ( \bar{e}_{b'_j-t_i} ) \\
& = &\sum_{*r} \varepsilon(M/A)\varepsilon(N/B)
   \varepsilon(M'/A')\varepsilon(N'/B')\\
 && \cdot   q^{(a_1\kappa_{(M|N)} + a_2 \kappa_{(M'|N')} + \kappa_{(A|B)})/2}
 \det(\bar{s}_{(M\backslash A|N \backslash B)}) \cdot \det(\bar{s}_{(M'\backslash A'|N' \backslash B')})\\
&& \cdot \sum_{\substack{s_1> \cdots  >s_r\\t_1> \cdots  > t_r}} \det(\bar{e}_{a_j -t_i} )
\cdot \det (\bar{h}_{b_j - s_i} ) \cdot \det(\bar{h}_{a'_j-s_i} ) \cdot \det ( \bar{e}_{b'_j-t_i} ).
\een
where the condition ($*r$) in the summation is given by \eqref{summation condition}.
\end{proof}

\subsection{Proof of the Framed ADKMV Conjecture in the two-legged case}

In this subsection we finish the proof of Theorem \ref{thm:Main}.

We now simplify the summation in \eqref{eqn:Sum}.
Let $\eta=(s_1, \dots, s_r|t_1, \dots, t_r)$.
We first take $\sum_{\gamma = (A|B), \, r(\gamma) = r}$:
\ben
&& \sum_{\substack{\gamma=(A|B)<\mu \\ r(\gamma)=r}}q^{\kappa_{(A|B)}/2}
 \varepsilon(M/A)\varepsilon(N/B)
 \det(\bar{s}_{(M\backslash A|N \backslash B)}) \\
&& \cdot  \det(\bar{e}_{a_i-t_j})_{r\times r} \cdot   \det(\bar{h}_{b_j-s_i})_{r\times r} \\
& = & \sum_{\substack{\gamma=(A|B)<\mu \\ r(\gamma)=r}}\varepsilon(M/A)\varepsilon(N/B)
   \det(\bar{s}_{(M\backslash A|N \backslash B)}) \\
&& \cdot \det(q^{\kappa_{(a_i|0)}/2} \bar{e}_{a_i-t_j})_{r\times r}
\cdot   \det(q^{\kappa_{(0|b_j)}/2} \bar{h}_{b_j-s_i})_{r\times r} \\
& = & (-1)^r \det\left(\begin{array}{cc}
(\bar{s}_{(m_i|n_j)})_{k\times k } & (q^{\kappa_{(m_i|0)}/2} \bar{e}_{m_i-t_j})_{k\times r}\\
(q^{\kappa_{(0|n_j)}/2} \bar{h}_{n_j-s_i})_{r\times k}    &  0_{r\times r}
\end{array}\right).
\een
By Proposition \ref{prop:SchurSpec} and the fact that $\kappa_{\mu^t} = - \kappa_\mu$,
we have \cite{Zh4}:
\be \label{eqn:SchurT}
s_\mu(q^\rho) = q^{\kappa_\mu/2} s_{\mu^t}(q^\rho).
\ee
It follows that
\ben
 s_{(m_i|n_j)}(q^\rho)=q^{\kappa_{(m_i|n_j)}/2}s_{(n_j|m_i)}(q^\rho).
\een
Using this we get:
\ben
&& \sum_{\substack{\gamma=(A|B)<\mu \\ r(\gamma)=r}}q^{\kappa_{(A|B)}/2}
 \varepsilon(M/A)\varepsilon(N/B)
 \det(\bar{s}_{(M\backslash A|N \backslash B)}) \\
&& \cdot  \det(\bar{e}_{a_i-t_j})_{r\times r} \cdot   \det(\bar{h}_{b_j-s_i})_{r\times r} \\
& = & (-1)^r \det\left(\begin{array}{cc}
(q^{\kappa_{(m_i|n_j)}/2} \bar{s}_{(n_j|m_i)})_{k\times k } & (q^{\kappa_{(m_i|0)}/2}\bar{e}_{m_i-t_j})_{k\times r}\\
(q^{\kappa_{(0|n_j)}/2}\bar{h}_{n_j-s_i})_{r\times k}    &  0_{r\times r}
\end{array}\right)\\
& = & q^{\kappa_\mu/2}\bar{s}_{\mu^t/\eta}(q^\rho).
\een
Similarly,
we have
\ben
&& \sum_{\substack{\gamma'=(A'|B')<\nu \\ r(\gamma')=r}} \varepsilon(M'/A')\varepsilon(N'/B')
   \det(\bar{s}_{(M'\backslash A'|N'\backslash B')}) \\
&& \cdot   \det (\bar{h}_{a'_i - s_j})_{r\times r} \cdot \det (\bar{e}_{b'_i-t_j})_{r \times r} \\
&= & (-1)^r \det\left(\begin{array}{cc}
(\bar{s}_{(m'_i|n'_j)})_{l\times l } & (\bar{h}_{m'_i-s_j})_{l\times r} \\
(\bar{e}_{n'_j-t_i})_{r\times l}    &  0_{r\times r}
\end{array}\right) \\
& = & s_{\nu/\eta}(q^\rho).
\een
Therefore,
we get:
\be
C^{(a_1,a_2)}_{\mu\nu}(q) =  q^{\frac{(a_1+1)\kappa_\mu + a_2\kappa_\nu}{2}} \sum_\eta s_{\mu^t/\eta}(q^\rho)s_{\nu/\eta}(q^\rho).
\ee
This matches with \eqref{eqn:W2Legged},
so the proof of Theorem \ref{thm:Main} is completed.

\begin{rem}
Note the charge 0 subspace $(\cF\otimes \cF)^{(0)}$ has a direct sum decomposition
\begin{equation}
(\cF\otimes \cF)^{(0)} = \bigoplus_{n\in \mathbb{Z}}(\cF^{(n)}\otimes \cF^{(-n)}).
\end{equation}
The two-legged topological vertex corresponds to only the component
of $T(|0\rangle\otimes|0\rangle)$ in $\cF^{(0)}\otimes \cF^{(0)}$.
It is interesting to find the geometric meaning of other components.
\end{rem}

From the above proof one can also see that
\begin{multline} \label{eqn:Va1a2}
V^{(a_1,a_2)}(q) = \sum_{\mu=(M|N)} \sum_{\nu=(M'|N')} \sum_{\substack{\eta=(S|T)\\ \eta < \mu, \eta < \nu \\ r(\eta) =r\geq 0}}
 q^{\frac{(a_1+1)\kappa_\mu + a_2\kappa_\nu}{2}} \\ \cdot
 \det\left(\begin{array}{cc}
( \bar{s}_{(m_i|n_j)})_{k\times k } & (\bar{e}_{m_i-t_j})_{k\times r}\\
( \bar{h}_{n_j-s_i})_{r\times k}    &  0_{r\times r}
\end{array}\right) \cdot
\det\left(\begin{array}{cc}
(\bar{s}_{(m'_i|n'_j)})_{l\times l } & (\bar{h}_{m'_i-s_j})_{l\times r} \\
(\bar{e}_{n'_j-t_i})_{r\times l}    &  0_{r\times r}
\end{array}\right).
\end{multline}

\section{Towards a Proof of The Three-Legged Case}

In this section we present an intermediate result which should be useful for a proof
of the three-legged case of the Framed ADKMV Conjecture.

\subsection{From fermionic representation to determinantal representation}

If one assumes the Framed ADKMV Conjecture,
one can determine $A_{mn}^{ij}$ ($i,j=1,2,3$) by modifying the method of \cite{ADKMV} as in \S \ref{sec:A}.
They are indeed given by \eqref{framed coefficients 1}, \eqref{framed coefficients 2},
\eqref{framed coefficients 3}.
By \eqref{eqn:CR}, we can expand $T$ as follows:
\ben
&& \prod_{i,j=1,2,3} \prod_{m,n \geq 0}  (1 + A^{ij}_{mn}\psi_{mn}^{ij} ) \\
& = & \prod_{i,j=1,2,3} (1\ +\sum_{\substack {m_1>\cdots>m_k \geq 0\\ n_1>\cdots>n_k \geq o}} \det(A^{ij}_{m_an_b})
\prod_{a=1}^k \psi^{ij}_{m_an_a} ) \\
&=& \prod_{i,j=1,2,3} \sum_{\mu^{ij}} \det (A_{\mu^{ij}}^{ij}) \psi^{ij}_{\mu^{ij}} \\
&=& \sum_{\mu^{ij}} \prod_{i,j=1,2,3} \det (A_{\mu^{ij}}^{ij})
   \cdot \prod_{i,j=1,2,3} \psi^{ij}_{\mu^{ij}},
\een
where the summation is over all partitions $\mu^{11} , \mu^{12} ,\dots ,\mu^{33}$.

Now let $\mu^i = (M^i|N^i)=(m^i_1, m^i_2, \cdots, m^i_{k_i} | n^i_1, n^i_2, \cdots, n^i_{k^i})$
(when $k_i = 0$, $\mu^i$ is the empty partition).
Denote by $C^{(\ba)}_{\mu^1, \mu^2, \mu^3}$ the right-hand side of \eqref{eqn:FramedADKMV}.
It is clear that
\be
C^{(\ba)}_{\mu^1, \mu^2, \mu^3} =
\sum_{\prod_{i,j=1,2,3} \psi^{ij}_{\mu^{ij}}
= \pm \prod_{i=1,2,3} \psi^{ii}_{\mu^i}} \pm \prod_{i,j=1,2,3} \det A_{\mu^{ij}}^{ij}.
\ee
The $\pm$ signs can be tracked off using the Koszul sign convention.
More precisely we have the following

\begin{lem}
Let $C^{(\ba)}_{\mu^1,\mu^2,\mu^3}$ be  the right-hand side of \eqref{eqn:FramedADKMV}.
Then one has
\begin{equation}\label{eqn: 3-part span}
\begin{split}
&C^{(\ba)}_{\mu^1,\mu^2, \mu^3} =\\
& (-1)^{||N^\lambda|| + ||N^\mu|| + ||N^\nu||}\sum (-1)^{r^{32}r^{12}+r^{31}r^{32}+r^{21}r^{21}+r^{32}r^{13}} \\
& \cdot \prod_{i=1}^3 \biggl( \epsilon(M^{ii}, M^{i c(i)}, M^{i c^2(i)})
\epsilon(N^{ii}, N^{c(i) i }, N^{c^2(i) i }) \biggr) \cdot \prod_{i,j =1}^3 \det(A^{ij}_{(M^{ij}|N^{ij})}).
\end{split}
\end{equation}
Here $c \in S_3$ is the $3$-cycle translation that transforms $1$ to $2$, $2$ to $3$ and $3$ to $1$,
the summation is over all partitions $\gamma^{ij}= (M^{ij}|N^{ij})$ satisfying the following conditions:
\begin{equation}
\begin{split}
& M^{ii}\amalg M^{i c(i)}\amalg  M^{i c^2(i)} = M^{i},\ i = 1, 2, 3 \\
& N^{ii}\amalg N^{c(i) i }\amalg N^{c^2(i) i } = N^{i}, \ i = 1, 2, 3 \\
& \# M^{ij} = \# N^{ij} = r^{ij}\geq 0,\ i, j = 1, 2, 3,
\end{split}
\end{equation}
and $\epsilon(M^{ii}, M^{i c(i)}, M^{i c^2(i)})$ is the sign of the transformation that rearranges
the ordered set of numbers $(M^{ii}, M^{i c(i)}, M^{i c^2(i)})$ in a decreasing order.
\end{lem}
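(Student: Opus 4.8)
The plan is to begin from the fully expanded form of $T$ obtained just above the statement,
\[
T = \sum_{\{\mu^{ij}\}} \Bigl(\prod_{i,j=1,2,3}\det(A^{ij}_{\mu^{ij}})\Bigr) \prod_{i,j=1,2,3}\psi^{ij}_{\mu^{ij}},
\]
the sum running over all nine partitions $\mu^{ij}=(M^{ij}|N^{ij})$. Since $C^{(\ba)}_{\mu^1,\mu^2,\mu^3} = \langle\mu^1,\mu^2,\mu^3| T\,\vac\otimes\vac\otimes\vac$ and the $|\mu\rangle$ are orthonormal, the problem splits into two independent tasks: (i) decide for which tuples $\{\mu^{ij}\}$ the vector $\prod_{i,j}\psi^{ij}_{\mu^{ij}}\,\vac\otimes\vac\otimes\vac$ is a nonzero scalar multiple of $|\mu^1,\mu^2,\mu^3\rangle$, and (ii) evaluate that scalar. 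The determinant weights $\det(A^{ij}_{(M^{ij}|N^{ij})})$ then ride along untouched, as in the one- and two-legged cases.

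For task (i) I would expand each block as the alternating string $\psi^{ij}_{\mu^{ij}} = \prod_a \psi^i_{m^{ij}_a+\half}\psi^{j*}_{-n^{ij}_a-\half}$ and note that the factor-$i$ creators $\psi^i_\bullet$ occur exactly in the blocks $(i,1),(i,2),(i,3)$, while the factor-$i$ creators $\psi^{i*}_\bullet$ occur exactly in the blocks $(1,i),(2,i),(3,i)$. Comparing with the single-factor presentation $|\mu^i\rangle = (-1)^{||N^i||}\prod_a \psi^i_{m^i_a+\half}\psi^{i*}_{-n^i_a-\half}\vac$, the vector is a nonzero multiple of $|\mu^1,\mu^2,\mu^3\rangle$ exactly when the arm-sets $M^{i1},M^{i2},M^{i3}$ are disjoint with union $M^i$, the leg-sets $N^{1i},N^{2i},N^{3i}$ are disjoint with union $N^i$, and $\#M^{ij}=\#N^{ij}=r^{ij}$; a repeated index forces some $\psi$ or $\psi^*$ to square to zero, and an index of $\mu^i$ not covered by the blocks makes the inner product vanish. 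These are precisely the summation conditions in the statement.

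For task (ii) I would assemble the sign from three sources. First, using the Koszul convention I would commute operators so as to gather all factor-$1$ operators, then all factor-$2$, then all factor-$3$; the resulting transpositions are between operators on distinct tensor factors and yield the cross-term sign, a quadratic form in the $r^{ij}$ that I would determine by fixing a reference order on the nine blocks and counting interchanges. Second, within each factor $i$ I would sort the collected $\psi^i_\bullet$ creators, coming from the blocks $(i,i),(i,c(i)),(i,c^2(i))$, into strictly decreasing order; this is the three-block analogue of Lemma \ref{lm:Fermion} and produces $\epsilon(M^{ii},M^{i c(i)},M^{i c^2(i)})$, and likewise sorting the $\psi^{i*}_\bullet$ creators from the blocks $(i,i),(c(i),i),(c^2(i),i)$ produces $\epsilon(N^{ii},N^{c(i) i},N^{c^2(i) i})$. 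Third, the factors $(-1)^{||N^i||}$ from the three single-factor presentations combine into the global prefactor $(-1)^{||N^1||+||N^2||+||N^3||}$. Multiplying these signs by the inert determinant weights gives the asserted identity.

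The main obstacle is the inter-factor Koszul bookkeeping in the first source: because each off-diagonal block $(i,j)$ interleaves a factor-$i$ creator with a factor-$j$ creator, the number of sign-changing interchanges produced when the blocks are regrouped by factor is genuinely sensitive both to the chosen block order and to the alternating structure inside each block, and some intra-block separation signs must be disentangled from the intra-factor sorting signs before the clean quadratic form emerges. I would isolate the regrouping permutation alone (leaving the within-factor sorting to the $\epsilon$'s), check that its sign reproduces the stated exponent $r^{32}r^{12}+r^{31}r^{32}+r^{21}r^{21}+r^{32}r^{13}$, and as a consistency test specialize to $\mu^3=\emptyset$, which kills all $r^{i3}$ and $r^{3j}$ and should recover the two-legged sign used in Lemma \ref{lem:fermion-determinant}.
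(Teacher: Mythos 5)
Your proposal follows the same route as the paper: the paper's entire argument for this lemma is the expansion of $T$ into $\sum_{\{\mu^{ij}\}}\prod_{i,j}\det(A^{ij}_{\mu^{ij}})\prod_{i,j}\psi^{ij}_{\mu^{ij}}$ displayed just before the statement, followed by the one-line remark that the $\pm$ signs ``can be tracked off using the Koszul sign convention.'' Your three-way decomposition of the sign (inter-factor regrouping giving the quadratic form in the $r^{ij}$, intra-factor sorting giving the $\epsilon$'s, and the $(-1)^{||N^i||}$ prefactors from the single-factor presentations of $|\mu^i\rangle$), together with the $\mu^3=\emptyset$ consistency check, is exactly the bookkeeping the paper leaves implicit, so your outline is if anything more explicit than the published proof.
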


Similar to the proof of Proposition \ref{2-part det}, one can prove the following Proposition,
which gives the determinantal form of $C^{(\ba)}_{\mu^1,\mu^2, \mu^3}$.

\begin{prop}
We have
\begin{equation} \label{eqn:Ca1a2a3}
\begin{split}
& C^{(\ba)}_{\mu^1,\mu^2, \mu^3}\\
=&\sum(-1)^{r^{21}r^{23}+r^{31}r^{32}+r^{21}r^{21}+r^{32}r^{13}+r^{13}+r^{21}+r^{32}}\\
& q^{\frac{1}{6}(r^{12}+r^{23}+r^{31}-r^{21}-r^{32}-r^{13})}q^{\sum_{i=1}^3\sum_{t=1}^{k_i}(a_i+1)(m^i_t(m^i_t+1)- n^i_t(n^i_t+1))/2}\\
& \cdot \det\left(\begin{array}{ccc}
(\bar{s}_{(m^1_i|n^1_j)})_{k_1\times k_1 } & (\bar{h}_{m^1_i-s'_j})_{k_1\times r^{13}} & (\bar{e}_{m^1_i-t_j})_{k_1\times r^{12}} \\
(\bar{h}_{n^1_j-s_i})_{r^{21}\times k_1} & 0_{r^{21}\times r^{13}}    & 0_{r^{21}\times r^{12}} \\
(\bar{e}_{n^1_j-t'_i})_{r^{31}\times k_1} & 0_{r^{31}\times r^{13}}   & 0_{r^{31}\times r^{12}}
\end{array}\right)\\
& \cdot \det\left(\begin{array}{ccc}
(\bar{s}_{(m^2_i|n^2_j)})_{k_2\times k_2 }  & (\bar{h}_{m^2_i-s_j})_{k_2\times r^{21}}& (\bar{e}_{m^2_i-t''_j})_{k_2\times r^{23}}\\
(\bar{h}_{n^2_j-s''_i})_{r^{32}\times k_2}   & 0_{r^{32}\times r^{21}} & 0_{r^{32}\times r^{23}}  \\
(\bar{e}_{n^2_j-t_i})_{r^{12}\times k_2} & 0_{r^{12}\times r^{21}} & 0_{r^{12}\times r^{23}}
\end{array}\right)\\
& \cdot \det\left(\begin{array}{ccc}
(\bar{s}_{(m^3_i|n^3_j)})_{k_3\times k_3 } & (\bar{h}_{m^1_i-s''_j})_{k_3\times r^{32}} & (\bar{e}_{m^3_i-t'_j})_{k_3\times r^{31}} \\
(\bar{h}_{n^3_j-s'_i})_{r^{13}\times k_3} & 0_{r^{13}\times r^{32}}    & 0_{r^{13}\times r^{31}} \\
(\bar{e}_{n^3_j-t''_i})_{r^{23}\times k_3} & 0_{r^{23}\times r^{32}}   & 0_{r^{23}\times r^{31}}
\end{array}\right).
\end{split}
\end{equation}
Here the summation is taken over all $r^{ij}\geq 0, i \neq j , i , j = 1, 2, 3$ satisfying the conditions
\begin{equation*}
r^{ic(i)} + r^{ic^2(i)} = r^{c(i)i} + r^{c^2(i)i}\leq k_i,\ i = 1, 2, 3 \\
\end{equation*}
 and all decreasing sequences $\{s_i\}, \{s'_i\}, \{s''_i\}, \{t_i\}, \{t'_i\}, \{t''_i\}$ of nonnegative integers.
\end{prop}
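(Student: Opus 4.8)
The plan is to mimic the structure of the proof of Proposition~\ref{2-part det} (the two-legged case), now bookkeeping three types of ``internal'' index pairings instead of one. Starting from the expansion of $T$ derived just above the statement, namely
\be
T(|0\rangle^{\otimes 3}) = \sum_{\mu^{11}, \dots, \mu^{33}} \prod_{i,j=1,2,3} \det(A^{ij}_{\mu^{ij}})
\cdot \prod_{i,j=1,2,3} \psi^{ij}_{\mu^{ij}},
\ee
I would pair $\langle \mu^1, \mu^2, \mu^3 |$ against this sum. The operator $\psi^{ij}_{\mu^{ij}} = \prod_a \psi^i_{m^{ij}_a + \half} \psi^{j*}_{-n^{ij}_a - \half}$ acts nontrivially only when the creators $\psi^i$ acting on the $i$-th factor, taken over all $j$, exactly rebuild $M^i$, and the annihilators $\psi^{j*}$ acting on the $i$-th factor, taken over all $i$, exactly rebuild $N^i$. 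This forces the three bookkeeping constraints $M^{ii}\amalg M^{ic(i)} \amalg M^{ic^2(i)} = M^i$ and $N^{ii}\amalg N^{c(i)i}\amalg N^{c^2(i)i} = N^i$ with matching cardinalities $r^{ij}$. So the first step is Lemma~\ref{eqn: 3-part span}, which is exactly the combinatorial reorganization of this pairing: the coefficient of $|\mu^1,\mu^2,\mu^3\rangle$ becomes the claimed sum of products $\prod_{i,j}\det(A^{ij}_{(M^{ij}|N^{ij})})$ weighted by the rearrangement signs $\epsilon(\cdots)$ and the Koszul crossing signs $(-1)^{r^{32}r^{12}+\cdots}$.

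Once the span formula is in hand, I would substitute the explicit determinantal evaluations of the three kinds of blocks. For the diagonal blocks I use Lemma~\ref{lm:DetAii} to rewrite $\det(A^{ii}_{(M^{ii}|N^{ii})})$ in terms of $\det(\bar{s}_{(m|n)})$ times the framing factor $q^{a_i \kappa/2}$ and a sign $(-1)^{||N^{ii}||}$. For the off-diagonal blocks I use the two-leg computation \eqref{eqn:A12A21} together with the hook skew-Schur evaluation of Proposition~\ref{prop:SkewSchurHook}, which turns each $A^{ij}_{mn}$ (for $i\ne j$) into a sum over $(s|t)$ of products $\bar{h}\,\bar{e}$. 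Exactly as in Lemma~\ref{lm:DetA12A21}, expanding the two conjugate off-diagonal determinants $\det(A^{ij})\det(A^{ji})$ and collapsing the resulting permutation sums yields a factorized object: a sum over strictly decreasing auxiliary sequences $\{s_i\},\{t_i\}$ of a product of four determinants $\det(\bar e_{a_j - t_i})\det(\bar h_{b_j - s_i})\det(\bar h_{a'_j - s_i})\det(\bar e_{b'_j - t_i})$. The only genuinely new feature in three legs is that each leg $i$ now carries \emph{two} incoming and \emph{two} outgoing auxiliary sequences (the pairs involving $c(i)$ and $c^2(i)$), so one must keep six decreasing sequences $\{s_i\},\{s'_i\},\{s''_i\},\{t_i\},\{t'_i\},\{t''_i\}$ rather than two.

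The decisive step is then to absorb the diagonal $\det(\bar s)$ block together with the auxiliary $\bar h$ and $\bar e$ columns/rows into a single bordered determinant, one for each leg. Here I would invoke, exactly as in the final subsection of \S\ref{sec:TwoLegged}, the identity \eqref{eqn:SchurT}, $s_\mu(q^\rho) = q^{\kappa_\mu/2} s_{\mu^t}(q^\rho)$, to shift the framing powers $q^{\kappa/2}$ into the diagonal $\bar s$ entries and then recognize each resulting $(k_i + r + r')\times(k_i + r + r')$ bordered determinant as the three-block matrix displayed in \eqref{eqn:Ca1a2a3}. The factors of $q^{1/6}$ per off-diagonal slot assemble into the global $q^{\frac16(r^{12}+r^{23}+r^{31}-r^{21}-r^{32}-r^{13})}$, and the various $(-1)^{||N||}$, $\epsilon$, and Koszul signs must recombine into the stated overall sign $(-1)^{r^{21}r^{23}+r^{31}r^{32}+r^{21}r^{21}+r^{32}r^{13}+r^{13}+r^{21}+r^{32}}$.

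The hard part will be precisely this sign and framing-power bookkeeping. In the two-leg case there is a single crossing contributing $(-1)^r$ and a single $q^{1/6}$ pair cancelling to $1$; with three legs the crossings are genuinely asymmetric under the cyclic labelling $c$, so the $q^{1/6}$ powers no longer cancel and the Koszul signs couple the six $r^{ij}$ in the nontrivial quadratic pattern above. I expect that verifying that all of these signs and the auxiliary rows/columns slot correctly into the three bordered determinants — in particular checking that the $s,s',s''$ sequences border the correct legs ($\bar h_{m^1_i - s'_j}$ on leg $1$ but $\bar h_{n^3_j - s'_i}$ on leg $3$, and so on) and that no sequence is double-counted — is where essentially all the care is needed. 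The structural steps (block expansion, hook evaluation, transpose identity, bordered-determinant recognition) are faithful transcriptions of the two-leg argument and should go through verbatim.
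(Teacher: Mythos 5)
Your plan reproduces the paper's own argument: the paper proves this proposition only by the remark that it is ``similar to the proof of Proposition \ref{2-part det}'', and the steps you list --- the expansion of $T$ into $\sum \prod \det(A^{ij}_{\mu^{ij}})\psi^{ij}_{\mu^{ij}}$, the pairing lemma with Koszul signs, the evaluation of the diagonal blocks via Lemma \ref{lm:DetAii} and of the paired off-diagonal blocks via the analogue of Lemma \ref{lm:DetA12A21} and Proposition \ref{prop:SkewSchurHook}, and the reassembly into one bordered determinant per leg using $s_\mu(q^\rho)=q^{\kappa_\mu/2}s_{\mu^t}(q^\rho)$ --- are exactly that adaptation, with the six auxiliary sequences and the non-cancelling $q^{\pm 1/6}$ factors correctly identified as the only new features. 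The sign and framing-power bookkeeping you defer is likewise left implicit in the paper, so your sketch is at the same level of completeness as the source.
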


\begin{rem}
Equation \eqref{eqn:Ca1a2a3} generalizes \eqref{eqn:Va} and \eqref{eqn:Va1a2}.
\end{rem}

\maketitle

\end{document}